\documentclass[a4paper,11pt]{article}
\usepackage[top=1.25in, bottom=1.25in, left=3.0cm, right=3.0cm,footskip=0.25in]{geometry}

\usepackage{times}

 \RequirePackage{amsmath}
\usepackage{amssymb}
\usepackage{amsthm}
\usepackage{mathtools}
% The following packages will be automatically loaded:
% amsmath, amssymb, natbib, graphicx, url, algorithm2e
\bibliographystyle{plain}

\newtheorem{theorem}{Theorem}[section]
\newtheorem{definition}[theorem]{Definition}
\newtheorem{proposition}[theorem]{Proposition}
\newtheorem{lemma}[theorem]{Lemma}

\newtheorem{corollary}[theorem]{Corollary}

\title{Some Hoeffding- and Bernstein-type Concentration Inequalities}

% Use \Name{Author Name} to specify the name.
% If the surname contains spaces, enclose the surname
% in braces, e.g. \Name{John {Smith Jones}} similarly
% if the name has a "von" part, e.g \Name{Jane {de Winter}}.
% If the first letter in the forenames is a diacritic
% enclose the diacritic in braces, e.g. \Name{{\'E}louise Smith}

\author{{\bf Andreas Maurer} \\ Istituto Italiano di Tecnologia, 16163 Genoa, Italy \and {\bf Massimiliano Pontil}   \\ Istituto Italiano di Tecnologia, 16163 Genoa, Italy}
 
% Two authors with the same address

% Three or more authors with the same address:
% \coltauthor{\Name{Author Name1} \Email{an1@sample.com}\\
%  \Name{Author Name2} \Email{an2@sample.com}\\
%  \Name{Author Name3} \Email{an3@sample.com}\\
%  \addr Istituto Italiano di Tecnologia, 16163 Genoa, Italy}

% Authors with different addresses:
% \coltauthor{%
%  \Name{Andreas Maurer} \Email{am@andreas-maurer.eu}\\
%  \addr Istituto Italiano di Tecnologia, 16163 Genoa, Italy
%  \AND
%  \Name{Massimiliano Pontil} \Email{massimiliano.pontil@iit.it}\\
%  \addr Istituto Italiano di Tecnologia, 16163 Genoa, Italy
% }

\begin{document}

\maketitle

\begin{abstract}%
	We prove concentration inequalities for functions of independent random variables under sub-Gaussian and sub-exponential conditions. The inequalities are applied to principal subspace analysis, generalization bounds with Rademacher complexities and Lipschitz functions on unbounded metric spaces.%
\end{abstract}

\bigskip
\section{Introduction}
The popular bounded difference inequality \cite{McDiarmid98} has become a
standard tool in the analysis of algorithms. It bounds the deviation
probability of a function of independent random variables from its mean in
terms of the sum of conditional ranges, and may not be applied when these
ranges are infinite. This hampers the utility of the inequality in certain
situations. It may happen that the conditional ranges are infinite, but the
conditional versions, the random variables obtained by fixing all but one of
the arguments of the function, have light tails with exponential decay. In
this case we might still expect exponential concentration, but the bounded
difference inequality is of no help.

Vershyinin's book \cite{vershynin2018high} gives general Hoeffding and
Bernstein-type inequalities for sums of independent sub-Gaussian or
sub-exponential random variables. In situations, where the bounded
difference inequality is used, one would like to have analogous bounds for
general functions. In this work we use the entropy method{\ (\cite%
	{ledoux2001concentration}, \cite{boucheron2003concentration}, \cite%
	{Boucheron13})} to extend these inequalities from sums to general functions
of independent variables, for which the centered conditional versions are
sub-Gaussian or sub-exponential{,} respectively. These concentration
inequalities, Theorem \ref{Theorem subgaussian}, \ref{Theorem subexponential}
and \ref{Theorem Bernsteinoid}, are stated in Section \ref{Section results}
below. Theorems \ref{Theorem subexponential} and \ref{Theorem Bernsteinoid},
which apply to the {heavier} tailed sub-exponential distributions, are our
principal contributions. Theorem \ref{Theorem subgaussian} for the
sub-Gaussian case has less novelty, but it is included to complete the
picture, and because its proof provides a good demonstration of the entropy {%
	method.}

For the purpose of illustration we apply these results to some standard
problems in learning theory, vector valued concentration, the generalization
of PCA and the method of Rademacher {complexities}. Over the last twenty
years the latter method (\cite{Bartlett02}, \cite{Koltchinskii00}) has been
successfully used to prove generalization bounds in a variety of situations.
The Rademacher complexity itself does not necessitate boundedness, but, when
losses and data-distributions are unbounded, the use of the bounded
difference inequality can only be circumnavigated with considerable effort.
Using our bounds the extension is immediate. We also show how an inequality
of Kontorovich \cite{kontorovich2014concentration}, which describes
concentration on products of sub-Gaussian metric probability spaces and has
applications to algorithmic stability, can be extended to the
sub-exponential case.

\paragraph{Related work}

Several works contain results very similar to Theorem \ref{Theorem
	subgaussian}, which refers to the sub-Gaussian case. The closest to it is
Theorem 3 in \cite{Meir03}, which gives essentially the same learning bounds
for sub-Gaussian {distributions}. Theorem 1 in \cite%
{kontorovich2014concentration} is also somewhat similar, but specializes to
metric probability spaces. Somewhat akin is the work in \cite%
{kutin2002extensions}.

To address the sub-exponential case, we have not found results comparable to
Theorems \ref{Theorem subexponential} and \ref{Theorem Bernsteinoid} in the
literature.

There has been a lot of work to establish generalization in unbounded
situations (\cite{Meir03}, \cite{cortes2013relative}, \cite%
{kontorovich2014concentration}), or the astounding results in \cite%
{mendelson2014learning}, but we are unaware of an equally simple extension
of the method of Rademacher complexities to sub-exponential distributions,
as the one given below.

\section{Notation and Conventions}

We use upper-case letters for random variables and vectors of random
variables and lower case letters for scalars and vectors of scalars. In the
sequel $X=\left( X_{1},\dots ,X_{n}\right) $ is a vector of independent
random variables with values in a space $\mathcal{X}$, the vector $X^{\prime
}=\left( X_{1}^{\prime },\dots ,X_{n}^{\prime }\right) $ is iid to $X$ and $f
$ is a function $f:\mathcal{X}^{n}\rightarrow \mathbb{R}$. We are interested
in concentration of the random variable $f\left( X\right) $ about its
expectation, and require some special notation to describe the fluctuations
of $f$ in its $k$-th variable $X_{k}$, when the other variables $\left(
x_{i}:i\neq k\right) $ are given. 

\begin{definition}
	If $f:\mathcal{X}^{n}\rightarrow \mathbb{R}$, $x=\left(
	x_{1},...,x_{n}\right) \in \mathcal{X}^{n}$ and $X=\left(
	X_{1},...,X_{n}\right) $ is a random vector with independent components in $%
	\mathcal{X}^{n}$, then the $k$-th centered conditional version of $f$ is the
	random variable%
	\begin{equation*}
	f_{k}\left( X\right) \left( x\right) =f\left( x_{1},\dots
	,x_{k-1},X_{k},x_{k+1},\dots ,x_{n}\right) -\mathbb{E}\left[ f\left(
	x_{1},\dots ,x_{k-1},X_{k}^{\prime },x_{k+1},\dots ,x_{n}\right) \right] .
	\end{equation*}
\end{definition}

Then $f_{k}\left( X\right) $ is a random-variable-valued function $%
f_{k}\left( X\right) :x\in \mathcal{X}^{n}\mapsto f_{k}\left( X\right)
\left( x\right) $, which does not depend on the $k$-th coordinate of $x$. If 
$\left\Vert .\right\Vert _{a}$ is any given norm on random variables, then $%
\left\Vert f_{k}\left( X\right) \right\Vert _{a}\left( x\right) :=\left\Vert
f_{k}\left( X\right) \left( x\right) \right\Vert _{a}$ defines a
non-negative real-valued function $\left\Vert f_{k}\left( X\right)
\right\Vert _{a}$ on $\mathcal{X}^{n}$. Thus $\left\Vert f_{k}\left(
X\right) \right\Vert _{a}\left( X\right) $ is also a random variable, of
which $\left\Vert \left\Vert f_{k}\left( X\right) \right\Vert
_{a}\right\Vert _{\infty }$ is the essential supremum. If $X^{\prime }$ is
iid to $X$ then $\left\Vert f_{k}\left( X\right) \right\Vert _{a}$ is the
same function as $\left\Vert f_{k}\left( X^{\prime }\right) \right\Vert _{a}$
and $\left\Vert f_{k}\left( X\right) \right\Vert _{a}\left( X^{\prime
}\right) $ is iid to $\left\Vert f_{k}\left( X\right) \right\Vert _{a}\left(
X\right) $. Note that 
\begin{equation*}
f_{k}\left( X\right) \left( X\right) =f\left( X\right) -\mathbb{E}\left[
f\left( X\right) |X_{1},...,X_{k-1},X_{k+1},...X_{n}\right] .
\end{equation*}%
Also, if $f\left( x\right) =\sum_{i=1}^{n}x_{i}$, then $f_{k}\left( X\right)
\left( x\right) =X_{k}-\mathbb{E}\left[ X_{k}\right] $ is independent of $x$%
. 

It follows from Propositions 2.7.1 and 2.5.2 in \cite{vershynin2018high}
that we can equivalently redefine the usual sub-Gaussian and sub-exponential
norms $\left\Vert \cdot \right\Vert _{\psi _{2}}$ and $\left\Vert \cdot
\right\Vert _{\psi _{1}}$ for any real random variable $Z$ as%
\begin{equation}
\left\Vert Z\right\Vert _{\psi _{1}}=\sup_{p\geq 1}\frac{\left\Vert
	Z\right\Vert _{p}}{p}\text{ and }\left\Vert Z\right\Vert _{\psi
	_{2}}=\sup_{p\geq 1}\frac{\left\Vert Z\right\Vert _{p}}{\sqrt{p}},
\label{Modified Orlicz Norms}
\end{equation}%
where $\left\Vert \cdot \right\Vert _{p}$ are the usual $L_{p}$-norms. It
also follows from the {above mentioned} propositions that for every centered
sub-Gaussian random variable $Z$ we have, {for all $\beta \in \mathbb{R}$}, 
\begin{equation}
\mathbb{E}\left[ e^{\beta Z}\right] \leq e^{4e\beta ^{2}\left\Vert
	Z\right\Vert _{\psi _{2}}^{2}}.  \label{Subgaussian MGF bound}
\end{equation}

If $H$ is a Hilbert space, then the Hilbert space of Hilbert-Schmidt
operators $HS\left( H\right) $ is the set of bounded operators $T$ on $H$
satisfying $\left\Vert T\right\Vert _{HS}=\sqrt{\sum_{ij}\left\langle
	Te_{i},e_{j}\right\rangle _{H}^{2}}<\infty $ with inner product $%
\left\langle T,S\right\rangle _{HS}=\sum_{ij}\left\langle
Te_{i},e_{j}\right\rangle _{H}\left\langle Se_{i},e_{j}\right\rangle _{H}$,
where $\left( e_{i}\right) $ is an orthonormal basis. For $x\in H$ the
operator $Q_{x}\in HS\left( H\right) $ is defined by $Q_{x}y=\left\langle
y,x\right\rangle x$, and one verifies that $\left\Vert Q_{x}\right\Vert
_{HS}=\left\Vert x\right\Vert _{H}^{2}$.\bigskip 

\section{Results\label{Section results}}

Our first result assumes sub-Gaussian versions $f_{k}\left( X\right) $. It
is an unbounded analogue of the popular bounded difference inequality, which
is sometimes also called McDiarmid's inequality (\cite{Boucheron13}, \cite%
{McDiarmid98}).

\begin{theorem}
	\label{Theorem subgaussian}Let $f:\mathcal{X}^{n}\rightarrow \mathbb{R}$ and 
	$X=\left( X_{1},\dots ,X_{n}\right) $ be a vector of independent random
	variables with values in a space $\mathcal{X}$. Then for any $t>0$ we have%
	\begin{equation*}
	\Pr \left\{ f\left( X\right) -\mathbb{E}\left[ f\left( X^{\prime }\right) %
	\right] >t\right\} \leq \exp \left( \frac{-t^{2}}{32e\left\Vert
		\sum_{k}\left\Vert f_{k}\left( X\right) \right\Vert _{\psi
			_{2}}^{2}\right\Vert _{\infty }}\right) .
	\end{equation*}
\end{theorem}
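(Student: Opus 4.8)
The plan is to run the entropy method (the Herbst argument) on $Z=f(X)$, reducing the global control of the cumulant generating function to a per-coordinate estimate supplied by the sub-Gaussian hypothesis; the reason one cannot simply factor the moment generating function as for sums is precisely that the conditional versions of a general $f$ interact, whereas entropy tensorizes. Write $F(\lambda)=\log\mathbb{E}\,[e^{\lambda(Z-\mathbb{E}Z)}]$ and recall that, by degree-one homogeneity of the entropy functional $\mathrm{Ent}[Y]=\mathbb{E}[Y\log Y]-\mathbb{E}[Y]\log\mathbb{E}[Y]$, one has $\lambda F'(\lambda)-F(\lambda)=\mathrm{Ent}[e^{\lambda Z}]/\mathbb{E}[e^{\lambda Z}]$. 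The subadditivity (tensorization) of entropy for independent variables gives $\mathrm{Ent}[e^{\lambda Z}]\le\sum_{k}\mathbb{E}[\mathrm{Ent}_{k}[e^{\lambda Z}]]$, where $\mathrm{Ent}_{k}$ and $\mathbb{E}_{k}$ denote entropy and expectation computed in the variable $X_{k}$ with the remaining coordinates held fixed. Thus the whole proof rests on bounding each conditional entropy $\mathrm{Ent}_{k}[e^{\lambda Z}]$.

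First I would exploit the defining identity $f_{k}(X)(X)=f(X)-\mathbb{E}[f(X)\mid X_{i},\,i\neq k]$. Fixing all coordinates except the $k$-th, $Z$ equals the centered conditional version $W:=f_{k}(X)(X)$ plus a quantity not depending on $X_{k}$; since $\mathrm{Ent}$ is invariant under multiplication by a positive constant, the conditional ratio is unchanged, $\mathrm{Ent}_{k}[e^{\lambda Z}]/\mathbb{E}_{k}[e^{\lambda Z}]=\mathrm{Ent}_{k}[e^{\lambda W}]/\mathbb{E}_{k}[e^{\lambda W}]$. This is the step that lets the conditional versions enter: conditionally, $W$ is a centered sub-Gaussian random variable with $\Vert W\Vert_{\psi_{2}}=\Vert f_{k}(X)\Vert_{\psi_{2}}(x)$, a function independent of $x_{k}$.

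Next comes the per-coordinate entropy estimate, which I expect to be the crux. Writing $\phi(\lambda)=\log\mathbb{E}_{k}[e^{\lambda W}]$, the conditional ratio equals $\lambda\phi'(\lambda)-\phi(\lambda)=\int_{0}^{\lambda}u\,\phi''(u)\,du\ge 0$, and I would bound it using the sub-Gaussian moment generating bound \eqref{Subgaussian MGF bound}, which yields $\phi(\lambda)\le 4e\,\lambda^{2}\Vert f_{k}(X)\Vert_{\psi_{2}}^{2}$ for all $\lambda$. Combining this with the convexity of $\phi$ (to control $\lambda\phi'(\lambda)$ by increments $\phi(c\lambda)-\phi(\lambda)$ and optimizing over $c$) produces an estimate $\mathrm{Ent}_{k}[e^{\lambda Z}]\le c\,\lambda^{2}\,\Vert f_{k}(X)\Vert_{\psi_{2}}^{2}\,\mathbb{E}_{k}[e^{\lambda Z}]$ with a universal constant $c$. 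The delicate point is extracting a clean quadratic-in-$\lambda$ bound, valid for all $\lambda$, from the single inequality \eqref{Subgaussian MGF bound}; convexity of the cumulant generating function is exactly what makes this passage possible, and tracking the constant here is what eventually determines the value in the exponent.

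Finally I would recombine. Because $\Vert f_{k}(X)\Vert_{\psi_{2}}^{2}$ does not depend on $X_{k}$, it passes through the conditional expectation, so $\mathbb{E}[\Vert f_{k}(X)\Vert_{\psi_{2}}^{2}\,\mathbb{E}_{k}[e^{\lambda Z}]]=\mathbb{E}[\Vert f_{k}(X)\Vert_{\psi_{2}}^{2}\,e^{\lambda Z}]$; summing over $k$ and bounding $\sum_{k}\Vert f_{k}(X)\Vert_{\psi_{2}}^{2}$ by its essential supremum $V:=\bigl\Vert\sum_{k}\Vert f_{k}(X)\Vert_{\psi_{2}}^{2}\bigr\Vert_{\infty}$ gives $\mathrm{Ent}[e^{\lambda Z}]\le c\,V\lambda^{2}\,\mathbb{E}[e^{\lambda Z}]$. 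Feeding this into the differential inequality $\lambda F'(\lambda)-F(\lambda)\le c\,V\lambda^{2}$ and integrating (the Herbst argument, using $F(0)=0$, $F'(0)=0$, via $\tfrac{d}{d\lambda}(F(\lambda)/\lambda)\le cV$) yields $\mathbb{E}[e^{\lambda(Z-\mathbb{E}Z)}]\le e^{cV\lambda^{2}}$. A Chernoff bound $\Pr\{Z-\mathbb{E}Z>t\}\le e^{cV\lambda^{2}-\lambda t}$ optimized at $\lambda=t/(2cV)$ produces the claimed exponential $\exp(-t^{2}/(4cV))$, and carrying the constant from \eqref{Subgaussian MGF bound} through the previous steps fixes $4c=32e$, i.e. the factor $32e$ in the denominator.
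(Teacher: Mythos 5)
Your proposal follows essentially the same route as the paper's own proof: the entropy/Herbst argument with tensorization of entropy, a per-coordinate entropy bound extracted from the sub-Gaussian moment generating function bound \eqref{Subgaussian MGF bound} (your convexity step, optimized at $c=2$, is exactly the paper's Lemma \ref{Lemma sub-Gaussian entropy bound}, $S(Y)\leq \ln \mathbb{E}\left[ e^{2Y}\right]$, and yields the same per-coordinate constant $16e\lambda^{2}\left\Vert f_{k}\left( X\right)\right\Vert _{\psi _{2}}^{2}$), and then the integrated differential inequality \eqref{Log moment generating function bound} followed by a Chernoff bound. The only caveat is the final constant: carrying $16e$ through the Chernoff optimization gives $-t^{2}/\left( 64e V\right)$ rather than the stated $-t^{2}/\left( 32e V\right)$, a factor-of-two bookkeeping issue that is present in the paper's own proof as well and is not specific to your argument.
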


If $f$ is a sum of sub-Gaussian variables this reduces to the general
Hoeffding inequality, Theorem 2.6.2 in \cite{vershynin2018high}. On the
other hand, if the $f_{k}\left( X\right) $ are a.s. bounded, $\left\Vert
f_{k}\left( X\right) \right\Vert _{\infty }\left( x\right) \leq r_{k}\left(
x\right) $, then also $\left\Vert f_{k}\left( X\right) \right\Vert _{\psi
	_{2}}\left( x\right) \leq r_{k}\left( x\right) $ and we recover the bounded
difference inequality (Theorem 6.5 in \cite{Boucheron13}) up to a constant
factor. A similar results to Theorem \ref{Theorem subgaussian} is given with
better constants in \cite{kontorovich2014concentration}, although in
specialized and slightly weaker forms, where the essential supremum is
inside the sum in the denominator of the exponent.\bigskip 

The next two results are our principal contributions and apply to functions
with sub-exponential conditional versions.

\begin{theorem}
	\label{Theorem subexponential}With $f$ and $X$ as in Theorem \ref{Theorem
		subgaussian} for any $t>0$%
	\begin{multline*}
	\Pr \left\{ f\left( X\right) -\mathbb{E}\left[ f\left( X^{\prime }\right) %
	\right] >t\right\}  \\
	\leq \exp \left( \frac{-t^{2}}{4e^{2}\left\Vert \sum_{k}\left\Vert
		f_{k}\left( X\right) \right\Vert _{\psi _{1}}^{2}\right\Vert _{\infty
		}+2e\max_{k}\left\Vert \left\Vert f_{k}\left( X\right) \right\Vert _{\psi
		_{1}}\right\Vert _{\infty }t}\right) .
\end{multline*}
\end{theorem}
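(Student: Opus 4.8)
The plan is to prove the exponential--moment bound
\[
\log\mathbb{E}\!\left[e^{s\left(f(X)-\mathbb{E}f(X')\right)}\right]\le\frac{e^{2}Vs^{2}}{1-e\beta s},\qquad 0\le s<\tfrac{1}{e\beta},
\]
where I abbreviate $V=\Vert\sum_{k}\Vert f_{k}(X)\Vert_{\psi_{1}}^{2}\Vert_{\infty}$ and $\beta=\max_{k}\Vert\Vert f_{k}(X)\Vert_{\psi_{1}}\Vert_{\infty}$, and then to invoke the Cram\'er--Chernoff method. Since $X'$ is iid to $X$ the centering is the usual $f(X)-\mathbb{E}f(X)$. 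Writing the right-hand side as $\frac{s^{2}(2e^{2}V)}{2(1-e\beta s)}$ and optimising $st-\log\mathbb{E}[e^{s(f(X)-\mathbb{E}f(X))}]$ over $s$ by the standard sub-gamma computation (variance factor $2e^{2}V$, scale $e\beta$) gives precisely $\exp\!\big(-t^{2}/(4e^{2}V+2e\beta t)\big)$, so the whole statement reduces to this one estimate.

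For the estimate I would use the entropy method. With $F(s)=\mathbb{E}[e^{sf(X)}]$ and $\psi(s)=\log F(s)-s\,\mathbb{E}f(X)$, the Herbst identity $\frac{d}{ds}\frac{\psi(s)}{s}=\frac{\mathrm{Ent}[e^{sf(X)}]}{s^{2}F(s)}$ together with $\psi(s)/s\to 0$ reduces matters to showing
\[
\mathrm{Ent}\!\left[e^{sf(X)}\right]\le\frac{e^{2}Vs^{2}}{(1-e\beta s)^{2}}\,F(s),
\]
since integrating $e^{2}V/(1-e\beta r)^{2}$ over $[0,s]$ reproduces the target $e^{2}Vs^{2}/(1-e\beta s)$. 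I would then apply sub-additivity of entropy, $\mathrm{Ent}[e^{sf(X)}]\le\sum_{k}\mathbb{E}[\mathrm{Ent}^{(k)}[e^{sf(X)}]]$, where $\mathrm{Ent}^{(k)}$ acts on $X_{k}$ alone. Writing $f(X)=A_{k}+W_{k}$ with $A_{k}=\mathbb{E}[f(X)\mid(X_{i})_{i\neq k}]$ and $W_{k}=f_{k}(X)(X)$ the centered $k$-th conditional version, the $1$-homogeneity of entropy gives $\mathrm{Ent}^{(k)}[e^{sf(X)}]=e^{sA_{k}}\mathrm{Ent}^{(k)}[e^{sW_{k}}]$, reducing everything to a one-dimensional entropy of the centered sub-exponential variable $W_{k}$.

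The crux is this one-dimensional bound, with $b_{k}:=\Vert f_{k}(X)\Vert_{\psi_{1}}$ constant in $X_{k}$. Setting $\zeta_{k}(r)=\log\mathbb{E}^{(k)}[e^{rW_{k}}]$, the identity $\mathrm{Ent}^{(k)}[e^{sW_{k}}]=e^{\zeta_{k}(s)}\int_{0}^{s}r\,\zeta_{k}''(r)\,dr$ turns the task into an estimate of $\zeta_{k}''(r)$, the variance of $W_{k}$ under the law tilted by $e^{rW_{k}}$, which is at most $\mathbb{E}^{(k)}[W_{k}^{2}e^{rW_{k}}]$ because $\mathbb{E}^{(k)}[e^{rW_{k}}]\ge 1$. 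Expanding this tilted moment as a power series and inserting the sub-exponential moment bounds $\Vert W_{k}\Vert_{p}\le p\,b_{k}$ from (\ref{Modified Orlicz Norms}) together with $p!\ge(p/e)^{p}$ gives, for $eb_{k}r<1$, the key estimate $\mathbb{E}^{(k)}[W_{k}^{2}e^{rW_{k}}]\le 2e^{2}b_{k}^{2}/(1-eb_{k}r)^{3}$. Since $\int_{0}^{s}2e^{2}b_{k}^{2}r/(1-eb_{k}r)^{3}\,dr=e^{2}b_{k}^{2}s^{2}/(1-eb_{k}s)^{2}$, this yields $\mathrm{Ent}^{(k)}[e^{sW_{k}}]\le\frac{e^{2}b_{k}^{2}s^{2}}{(1-eb_{k}s)^{2}}\,\mathbb{E}^{(k)}[e^{sW_{k}}]$. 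Using $b_{k}\le\beta$ to enlarge the denominator to $(1-e\beta s)^{2}$, multiplying by $e^{sA_{k}}$ so that $e^{sA_{k}}\mathbb{E}^{(k)}[e^{sW_{k}}]=\mathbb{E}^{(k)}[e^{sf(X)}]$, taking expectations, summing over $k$, and bounding $\sum_{k}b_{k}^{2}\le V$ pointwise delivers the displayed entropy bound, and hence the theorem.

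I expect the one-dimensional entropy/moment estimate to be the main obstacle. In contrast to the bounded-difference situation, $W_{k}$ is unbounded, so there is no range bound to fall back on and one must instead control the exponentially tilted second moment $\mathbb{E}^{(k)}[W_{k}^{2}e^{rW_{k}}]$ uniformly over the tilt $r$. It is the growth of this moment, reflected in the factor $(1-eb_{k}r)^{-3}$, that confines the argument to the Bernstein regime $s<1/(e\beta)$ and produces the additional linear term $2e\beta t$ in the denominator; pushing the constant down to $e^{2}$, rather than the larger value a symmetrisation argument would give, requires using the moment bounds sharply through $p!\ge(p/e)^{p}$.
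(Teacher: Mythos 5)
Your proposal is correct and follows essentially the same route as the paper: the entropy method with tensorization, a one-dimensional entropy bound for the centered sub-exponential conditional versions obtained by expanding the tilted second moment $\mathbb{E}[W^2e^{rW}]$ as a power series with $\|W\|_p\le p\|W\|_{\psi_1}$ and Stirling, yielding the same bound $S(sW)\le e^2b^2s^2/(1-ebs)^2$, the same log-MGF bound $e^2Vs^2/(1-e\beta s)$, and the same sub-gamma optimization. The only differences are presentational — you use the un-normalized entropy with the Herbst identity and a single-integral variance representation and integrate in closed form, where the paper uses the normalized entropy $S$, the double-integral fluctuation representation from Maurer (2012), and termwise integration — and these are equivalent.
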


The bound exhibits a sub-Gaussian tail governed by the variance-proxy $%
\left\Vert \sum_{k}\left\Vert f_{k}\left( X\right) \right\Vert _{\psi
	_{1}}^{2}\right\Vert _{\infty }$ for small deviations, and a sub-exponential
tail governed by the scale-proxy $\max_{k}\left\Vert \left\Vert f_{k}\left(
X\right) \right\Vert _{\psi _{1}}\right\Vert _{\infty }$ for large
deviations. If $f$ is a sum we recover the inequality in \cite%
{vershynin2018high}, Theorem 2.8.1.

In Theorem \ref{Theorem subexponential} both the variance-proxy and the
scale proxy depend on the sub-exponential norms $\left\Vert \cdot
\right\Vert _{\psi _{1}}$. A well known two-tailed bound for sums of bounded
variables, Bernstein's inequality \cite{McDiarmid98}, has the variance proxy
depending on $\left\Vert \cdot \right\Vert _{2}$ and the scale-proxy on $%
\left\Vert \cdot \right\Vert _{\infty }$. When $\left\Vert \cdot \right\Vert
_{2}\ll \left\Vert \cdot \right\Vert _{\infty }$ this leads to tighter
bounds, whenever the inequality is operating in the sub-Gaussian regime,
which often happens for large sample-sizes. The next result allows a similar
use, whenever $\left\Vert \cdot \right\Vert _{2p}\ll q\left\Vert \cdot
\right\Vert _{\psi _{1}}$ for conjugate exponents $p$ and $q$.

\begin{theorem}
	\label{Theorem Bernsteinoid}With $f$ and $X$ as above let $p,q\in \left(
	1,\infty \right) $ satisfy $p^{-1}+q^{-1}=1$. Then for any $t>0$%
	\begin{equation*}
	\Pr \left\{ f\left( X\right) -\mathbb{E}\left[ f\left( X^{\prime }\right) %
	\right] >t\right\} \leq \exp \left( \frac{-t^{2}}{2\left\Vert
		\sum_{k}\left\Vert f_{k}\left( X\right) \right\Vert _{2p}^{2}\right\Vert
		_{\infty }+2eq\max_{k}\left\Vert \left\Vert f_{k}\left( X\right) \right\Vert
		_{\psi _{1}}\right\Vert _{\infty }t}\right) .
	\end{equation*}
\end{theorem}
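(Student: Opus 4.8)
The plan is to control the cumulant generating function $\psi(\lambda)=\log\mathbb{E}[e^{\lambda(Z-\bar{Z})}]$, where $Z=f(X)$ and $\bar{Z}=\mathbb{E}[f(X')]=\mathbb{E}[Z]$, by the entropy method, and then optimise the Chernoff bound $\Pr\{Z-\bar{Z}>t\}\le e^{\psi(\lambda)-\lambda t}$. With $\mathrm{Ent}[Y]=\mathbb{E}[Y\log Y]-\mathbb{E}[Y]\log\mathbb{E}[Y]$, both entropy and expectation of $e^{\lambda(Z-\bar Z)}$ carry the same factor $e^{-\lambda\bar Z}$, so $\lambda\psi'(\lambda)-\psi(\lambda)=\mathrm{Ent}[e^{\lambda Z}]/\mathbb{E}[e^{\lambda Z}]$. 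Subadditivity (tensorisation) of entropy bounds the numerator by $\sum_k\mathbb{E}[\mathrm{Ent}_k[e^{\lambda Z}]]$, where $\mathrm{Ent}_k$ and $\mathbb{E}_k$ act on the $k$-th variable only. Since $Z=\mathbb{E}_k[Z]+f_k(X)(X)$ with $\mathbb{E}_k[Z]$ independent of $X_k$, the factor $e^{\lambda\mathbb{E}_k[Z]}$ cancels between numerator and denominator, giving $\mathrm{Ent}_k[e^{\lambda Z}]/\mathbb{E}_k[e^{\lambda Z}]=\mathrm{Ent}_k[e^{\lambda f_k}]/\mathbb{E}_k[e^{\lambda f_k}]$, where $f_k:=f_k(X)(X)$ satisfies $\mathbb{E}_k[f_k]=0$. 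The whole problem is thereby reduced to the centred conditional versions, and $\|f_k\|_{2p}$, $\|f_k\|_{\psi_1}$ below denote the conditional $L_{2p}$- and $\psi_1$-norms (functions of the variables other than $X_k$).

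The crucial step is to bound this conditional ratio with the correct variance constant. Setting $\psi_k(s)=\log\mathbb{E}_k[e^{sf_k}]$, I would use the exact identity $\mathrm{Ent}_k[e^{\lambda f_k}]/\mathbb{E}_k[e^{\lambda f_k}]=\lambda\psi_k'(\lambda)-\psi_k(\lambda)=\int_0^\lambda s\,\psi_k''(s)\,ds$, which preserves the factor $\tfrac12$ carried by the leading variance term. Here $\psi_k''(s)$ is the variance of $f_k$ under the exponentially tilted law, so $\psi_k''(s)\le\mathbb{E}_k[f_k^2e^{sf_k}]/\mathbb{E}_k[e^{sf_k}]\le\mathbb{E}_k[f_k^2e^{s|f_k|}]$, the last step using $\mathbb{E}_k[e^{sf_k}]\ge1$ by Jensen. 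Expanding the exponential and applying Hölder with the conjugate exponents $p,q$ term by term, $\mathbb{E}_k[|f_k|^{j+2}]\le\|f_k\|_{2p}^2\|f_k\|_{jq}^{\,j}\le\|f_k\|_{2p}^2(jq\|f_k\|_{\psi_1})^j$, where the second inequality is the modified Orlicz bound (\ref{Modified Orlicz Norms}). Summing the resulting series with $j^j/j!\le e^j$ gives $\psi_k''(s)\le\|f_k\|_{2p}^2/(1-esq\|f_k\|_{\psi_1})$; bounding $\|f_k\|_{\psi_1}(x)$ by $b:=\max_k\|\,\|f_k(X)\|_{\psi_1}\|_\infty$ and setting $C=eqb$ yields $\psi_k''(s)\le\|f_k\|_{2p}^2/(1-Cs)$ for $0\le s<1/C$.

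Integrating and using $\int_0^\lambda s/(1-Cs)\,ds\le\lambda^2/(2(1-C\lambda))$ gives $\mathrm{Ent}_k[e^{\lambda f_k}]/\mathbb{E}_k[e^{\lambda f_k}]\le\|f_k\|_{2p}^2\,\lambda^2/(2(1-C\lambda))$. Multiplying by $\mathbb{E}_k[e^{\lambda Z}]$, summing over $k$ (the factor $\|f_k\|_{2p}^2$ does not depend on $X_k$, so each conditional expectation is absorbed into the outer one), dividing by $\mathbb{E}[e^{\lambda Z}]$, and taking the essential supremum $V=\|\sum_k\|f_k(X)\|_{2p}^2\|_\infty$ produces the differential inequality $\lambda\psi'(\lambda)-\psi(\lambda)\le V\lambda^2/(2(1-C\lambda))$ on $[0,1/C)$. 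A Herbst-type integration of $(\psi(\lambda)/\lambda)'\le V/(2(1-C\lambda))$ then gives $\psi(\lambda)\le-\tfrac{V\lambda}{2C}\log(1-C\lambda)\le V\lambda^2/(2(1-C\lambda))$, the last step by $-\log(1-u)\le u/(1-u)$. Inserting $\lambda=t/(V+Ct)\in[0,1/C)$ into the Chernoff bound gives exactly $\exp(-t^2/(2(V+Ct)))$, i.e.\ the stated inequality with $V$ the variance proxy and $C=eq\max_k\|\,\|f_k(X)\|_{\psi_1}\|_\infty$ the scale proxy.

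I expect the \emph{main obstacle} to be obtaining the conditional entropy bound with the sharp variance constant. The naive estimate $\mathrm{Ent}_k[e^{\lambda f_k}]\le\lambda\mathbb{E}_k[f_ke^{\lambda f_k}]=\mathrm{Cov}_k(\lambda f_k,e^{\lambda f_k})$ (valid because $f_k$ is centred) already loses a factor $2$ in the variance term — it only yields variance proxy $2V$, hence $\exp(-t^2/(4V))$ in the Gaussian regime — so one is forced to go through the integral identity for $\lambda\psi_k'-\psi_k$ and to estimate the tilted variance $\psi_k''$ directly. The delicate bookkeeping is the Hölder split: it must be arranged so that the $2p$-norm alone controls the variance proxy while the sub-exponential norm $\|\cdot\|_{\psi_1}$ together with the conjugate exponent $q$ controls the scale, and one must check that the geometric series converges precisely for $s<1/C$, matching the interval $[0,1/C)$ on which the final Bernstein bound is optimised.
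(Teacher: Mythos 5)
Your proposal is correct and follows essentially the same route as the paper: tensorization of entropy, a bound on the tilted conditional variance via Taylor expansion, H\"older with conjugate exponents $p,q$, the moment characterization of $\left\Vert \cdot\right\Vert _{\psi _{1}}$ and Stirling, followed by a Herbst-type integration and Chernoff optimization. The only differences are notational (the paper packages the argument through $S(Y)$, its fluctuation representation, and a ready-made optimization lemma, where you use $\mathrm{Ent}$, the identity $\lambda \psi _{k}^{\prime }-\psi _{k}=\int_{0}^{\lambda }s\,\psi _{k}^{\prime \prime }(s)\,ds$ and an explicit choice of $\lambda$), and both yield the identical constants.
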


We cannot let $p\rightarrow 1$ to recover the behaviour of Bernstein's
inequality in the sub-Gaussian regime, because this would drive the
scale-proxy to infinity. But already $p=q=2$ can give substantial
improvements over Theorem \ref{Theorem subexponential}, if the distributions
of the $f_{k}\left( X\right) $ are very concentrated. This inequality
appears to be new even if applied to sums. A proof is given in the
supplement, where we also show, that the $q$ in the scale-proxy can be
replaced by $\sqrt{q}$, if the sub-exponential norm is replaced by the
sub-Gaussian norm.\bigskip 

We conclude this section with a centering lemma, which will be useful in
applications. The proof is given in the supplement.

\begin{lemma}
	\label{Lemma conditional and centering} Let $X,X^{\prime }$ be iid with
	values in $\mathcal{X}$, $\phi :\mathcal{X\times X\rightarrow \mathbb{R}}$
	measurable, $\alpha \in \left\{ 1,2\right\} $. Then
	
	(i) $\left\Vert \mathbb{E}\left[ \phi \left( X,X^{\prime }\right) |X\right]
	\right\Vert _{\psi _{\alpha }}\leq \left\Vert \phi \left( X,X^{\prime
	}\right) \right\Vert _{\psi _{\alpha }}$
	
	(ii) If $\mathcal{X=\mathbb{R}}$ then $\left\Vert X-\mathbb{E}\left[ X\right]
	\right\Vert _{\psi _{\alpha }}\leq 2\left\Vert X\right\Vert _{\psi _{\alpha
		}}$.
	\end{lemma}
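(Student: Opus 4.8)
The plan is to prove both parts of Lemma \ref{Lemma conditional and centering} using the modified Orlicz norm characterization from \eqref{Modified Orlicz Norms}, which reduces everything to statements about $L_p$-norms where the key tools (Jensen's inequality and the triangle inequality) are readily available.

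For part (i), the plan is to work through the modified norm definition directly. Since $\left\Vert Z\right\Vert_{\psi_\alpha} = \sup_{p\geq 1} \left\Vert Z\right\Vert_p / p^{1/\alpha}$, it suffices to show the pointwise bound $\left\Vert \mathbb{E}\left[ \phi(X,X') | X\right]\right\Vert_p \leq \left\Vert \phi(X,X')\right\Vert_p$ for each $p\geq 1$; taking the supremum over $p$ then gives the result. The first step is to recognize that conditional expectation is a contraction on $L_p$ for $p\geq 1$. Concretely, writing $g(X) = \mathbb{E}\left[\phi(X,X')|X\right]$, I would apply the conditional Jensen inequality to the convex function $z\mapsto |z|^p$, obtaining $\left| g(X)\right|^p = \left| \mathbb{E}\left[\phi(X,X')|X\right]\right|^p \leq \mathbb{E}\left[ \left|\phi(X,X')\right|^p | X\right]$ almost surely. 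Taking the (unconditional) expectation of both sides and using the tower property collapses the right-hand side to $\mathbb{E}\left[\left|\phi(X,X')\right|^p\right]$, and raising to the power $1/p$ yields exactly the claimed $L_p$ contraction.

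For part (ii), the strategy is to exploit that $X$ and $X'$ are iid, so that $\mathbb{E}[X] = \mathbb{E}[X']$ and hence $X - \mathbb{E}[X] = \mathbb{E}\left[ X - X' \mid X\right]$. This lets me reuse part (i) with the specific choice $\phi(x,x') = x - x'$, giving $\left\Vert X - \mathbb{E}[X]\right\Vert_{\psi_\alpha} = \left\Vert \mathbb{E}\left[X - X' | X\right]\right\Vert_{\psi_\alpha} \leq \left\Vert X - X'\right\Vert_{\psi_\alpha}$. It then remains to bound $\left\Vert X - X'\right\Vert_{\psi_\alpha}$ by $2\left\Vert X\right\Vert_{\psi_\alpha}$, which follows from the triangle inequality for the Orlicz norm together with the fact that $X'$ is iid to $X$ and hence $\left\Vert X'\right\Vert_{\psi_\alpha} = \left\Vert X\right\Vert_{\psi_\alpha}$.

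I expect the only delicate point to be confirming that the modified norms in \eqref{Modified Orlicz Norms} genuinely inherit the triangle inequality and that the supremum-over-$p$ formulation commutes with the pointwise $L_p$ estimate used in part (i); both are routine once one observes that a supremum of seminorms is a seminorm and that the $L_p$-contraction holds uniformly in $p$. No serious obstacle is anticipated, and the entire argument should be short.
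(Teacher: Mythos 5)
Your proposal is correct and follows essentially the same route as the paper: part (i) is the $L_p$-contractivity of conditional expectation via (conditional) Jensen's inequality combined with the supremum-over-$p$ form of the norms in (\ref{Modified Orlicz Norms}), and part (ii) is the identity $X-\mathbb{E}[X]=\mathbb{E}[X-X'\mid X]$ followed by part (i) with $\phi(s,t)=s-t$ and the triangle inequality. Your single application of conditional Jensen to $z\mapsto|z|^p$ is in fact a slightly cleaner packaging of the paper's two-step Jensen argument, and the points you flag as delicate are indeed routine.
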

	
	One consequence of this lemma is, that we could equally well work with
	uncentered conditional versions, if we adjust the constants by an additional
	factor of $2$.
	
	\section{Applications\label{Section Applications}}
	
	To illustrate the use of these inequalities we give applications to vector
	valued concentration and different methods to prove generalization bounds.
	We concentrate mainly on applications of the more novel Theorems \ref%
	{Theorem subexponential} and \ref{Theorem Bernsteinoid}. Applications of the
	the sub-Gaussian inequality can often be substituted by the reader following
	the same pattern.
	
	\subsection{The sub-exponential norm}
	
	As all our applications use the sub-exponential norm we make some
	explanatory remarks before coming to the applications proper.
	Sub-exponential variables ($\left\Vert Z\right\Vert _{\psi _{1}}<\infty $)
	have {heavier} tails than sub-Gaussian variables and include the
	exponential, chi-squared and Poisson distributions. Products and squares of
	sub-Gaussian variables are sub-exponential, in particular 
	\begin{equation*}
	\left\Vert Z^{2}\right\Vert _{\psi _{1}}=\sup_{p\geq 1}\frac{\left\Vert
		Z^{2}\right\Vert _{p}}{p}=2\sup_{p\geq 1}\left( \frac{\left\Vert
		Z\right\Vert _{2p}}{\sqrt{2p}}\right) ^{2}\leq 2\left\Vert Z\right\Vert
	_{\psi _{2}}^{2}
	\end{equation*}%
	(we would have $\left\Vert Z^{2}\right\Vert _{\psi _{1}}=\left\Vert
	Z\right\Vert _{\psi _{2}}^{2}$, if the norms were defined as in \cite%
	{vershynin2018high}). All sub-Gaussian and bounded variables are
	sub-exponential. For bounded variables we have $\left\Vert Z\right\Vert
	_{\psi _{1}}\leq \left\Vert Z\right\Vert _{\psi _{2}}\leq \left\Vert
	Z\right\Vert _{\infty }$, but for concentrated variables the sub-Gaussian
	and sub-exponential norms can be much smaller. The arithmetic mean of $N$
	iid bounded variables has uniform norm $O\left( 1\right) $, sub-Gaussian
	norm $O\left( N^{-1/2}\right) $, and the square of the mean has
	sub-exponential norm $O\left( N^{-1}\right) $ (see \cite{vershynin2018high}%
	). Our inequalities can therefore be applied successfully to $n$ such
	variables, even when the bounded difference inequality gives only trivial
	results, for example when $n<\ln \left( 1/\delta \right) $, where $\delta $
	is the confidence parameter. For strongly concentrated variables we have the
	following lemma (with proof in the supplement).
	
	\begin{lemma}
		\label{Lemma subexponential norm and concentration}Suppose the random
		variable $X$ satisfies $E\left[ X\right] =0$, $\left\vert X\right\vert \leq 1
		$ a.s. and $\Pr \left\{ \left\vert X\right\vert >\epsilon \right\} \leq
		\epsilon $ for some $\epsilon >0$. Then $\forall p\geq 1,\left\Vert
		X\right\Vert _{p}\leq 2\epsilon ^{1/p}$ and $\left\Vert X\right\Vert _{\psi
			_{1}}\leq 2\left( e\ln \left( 1/\epsilon \right) \right) ^{-1}$.
	\end{lemma}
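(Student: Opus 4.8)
The plan is to prove the two claims in sequence: first the $L_p$ bound by a layer-cake estimate, then the $\psi_1$ bound by feeding that into the modified Orlicz norm (\ref{Modified Orlicz Norms}) and optimizing over $p$. Throughout I take the interesting regime $0<\epsilon<1$, so that $\ln(1/\epsilon)>0$ and the stated $\psi_1$ bound is meaningful.

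For the first claim I would start from the tail representation $\mathbb{E}[|X|^p]=\int_0^\infty p\,t^{p-1}\Pr\{|X|>t\}\,dt$. Since $|X|\leq 1$ a.s., the integrand vanishes for $t>1$, so I only integrate over $[0,1]$ and split at $t=\epsilon$. On $[0,\epsilon]$ I bound the tail by $1$, contributing at most $\int_0^\epsilon p\,t^{p-1}\,dt=\epsilon^p$. On $[\epsilon,1]$ the tail is non-increasing, so $\Pr\{|X|>t\}\leq \Pr\{|X|>\epsilon\}\leq \epsilon$, contributing at most $\epsilon\int_\epsilon^1 p\,t^{p-1}\,dt=\epsilon(1-\epsilon^p)\leq \epsilon$. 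Hence $\mathbb{E}[|X|^p]\leq \epsilon^p+\epsilon\leq 2\epsilon$, using $\epsilon^p\leq \epsilon$ for $p\geq 1$. Taking $p$-th roots gives $\|X\|_p\leq (2\epsilon)^{1/p}\leq 2\epsilon^{1/p}$, since $2^{1/p}\leq 2$. Notice that the zero-mean hypothesis is not actually needed here; it is presumably recorded because this lemma is meant to be combined with the centered conditional versions $f_k(X)$ elsewhere.

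For the second claim I would write $\|X\|_{\psi_1}=\sup_{p\geq 1}\|X\|_p/p\leq 2\sup_{p\geq 1}g(p)$ with $g(p):=\epsilon^{1/p}/p$, and maximize $g$. Setting $a:=\ln(1/\epsilon)>0$, I have $\ln g(p)=-a/p-\ln p$, whose derivative $(a-p)/p^2$ vanishes only at $p^\ast=a$, giving a unique interior maximum $g(a)=(ea)^{-1}=(e\ln(1/\epsilon))^{-1}$. When $a\geq 1$, i.e.\ $\epsilon\leq 1/e$, this critical point lies in the admissible range $p\geq 1$, so the supremum is exactly $(e\ln(1/\epsilon))^{-1}$ and the claim follows at once.

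The main obstacle is the boundary case $a<1$ (that is, $1/e<\epsilon<1$), where $g$ is already decreasing on $[1,\infty)$, so the constrained supremum is attained at $p=1$ with value $g(1)=\epsilon$, not at the interior critical point. There I would verify the elementary inequality $e\,\epsilon\ln(1/\epsilon)\leq 1$: the map $\epsilon\mapsto -\epsilon\ln\epsilon$ on $(0,1)$ is maximized at $\epsilon=1/e$ with maximal value $1/e$, so $e\,\epsilon\ln(1/\epsilon)\leq e\cdot(1/e)=1$, which rearranges to $\epsilon\leq (e\ln(1/\epsilon))^{-1}$. This shows $\sup_{p\geq 1}g(p)\leq (e\ln(1/\epsilon))^{-1}$ in this regime as well, so in all cases $0<\epsilon<1$ we obtain $\|X\|_{\psi_1}\leq 2(e\ln(1/\epsilon))^{-1}$, as required.
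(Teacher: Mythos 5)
Your proof is correct and follows essentially the same route as the paper's: bound $\mathbb{E}\left[ \left\vert X\right\vert ^{p}\right]$ by splitting on the event $\left\{ \left\vert X\right\vert >\epsilon \right\}$ (you via the layer-cake formula, the paper via a direct split of the expectation plus concavity of $t\mapsto t^{1/p}$), then optimize $\epsilon ^{1/p}/p$ over $p$. Your explicit treatment of the regime $1/e<\epsilon <1$, where the critical point $p^{\ast }=\ln \left( 1/\epsilon \right)$ falls outside $[1,\infty )$, is in fact more careful than the paper, which simply asserts $\sup_{p\geq 1}2\epsilon ^{1/p}/p=2\left( e\ln \left( 1/\epsilon \right) \right) ^{-1}$ without this case distinction.
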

	
	In a nearly deterministic situation, with $\epsilon =e^{-d}$, we have $%
	\left\Vert X\right\Vert _{\psi _{1}}\leq O\left( 1/d\right) $, and a simple
	union bound of the sub-exponential inequalities allows uniform estimation of 
	$e^{d}$ such variables with sample size $O\left( n\right) \leq O\left(
	d\right) $.
	
	In several places we will require a sub-Gaussian or sub-exponential bound on
	the norm of random vectors. This may seem quite restrictive. If $X=\sum Z_{i}
	$, then in general we can only say $\left\Vert \left\Vert X\right\Vert
	\right\Vert _{\psi _{1}}\leq \sum_{i}\left\Vert \left\Vert Z_{i}\right\Vert
	\right\Vert _{\psi _{1}}$, so if $\mathcal{X}=\mathbb{R}^{d}$ with basis $%
	\left( e_{i}\right) $ then our most general estimate is $\left\Vert
	\left\Vert X\right\Vert \right\Vert _{\psi _{1}}\leq
	\sum_{i=1}^{d}\left\Vert \left\langle e_{i},X\right\rangle \right\Vert
	_{\psi _{1}}$, which has poor dimension dependence. But in many situations
	in machine learning one can assume that $X$ is a sum, $X=Z_{signal}+Z_{noise}
	$, where $\left\Vert Z_{signal}\right\Vert $ is bounded and the perturbing
	component $\left\Vert Z_{noise}\right\Vert $ is of small sub-exponential
	norm, albeit potentially unbounded.
	
	\subsection{Vector valued concentration\label{Subsection vector valued}}
	
	We begin with concentration of norms in a normed space $\left( \mathcal{X}%
	,\left\Vert .\right\Vert \right) $. 
	
	\begin{proposition}
		\label{Proposition vector concentration}Suppose the $X_{i}$ are independent
		random variables with values in a normed space $\left( \mathcal{X}%
		,\left\Vert .\right\Vert \right) $ such that $\left\Vert \left\Vert
		X_{i}\right\Vert \right\Vert _{\psi _{1}}\leq \infty $ and that $\delta >0$.
		(i) With probability at least $1-\delta $%
		\begin{equation*}
		\left\Vert \sum_{i}X_{i}\right\Vert -\mathbb{E}\left\Vert
		\sum_{i}X_{i}\right\Vert \leq 4e\sqrt{\sum_{k}\left\Vert \left\Vert
			X_{k}\right\Vert \right\Vert _{\psi _{1}}^{2}\ln \left( 1/\delta \right) }%
		+4e\max_{k}\left\Vert \left\Vert X_{k}\right\Vert \right\Vert _{\psi
			_{1}}\ln \left( 1/\delta \right) .
		\end{equation*}%
		The inequality is two-sides, that is the two terms on the left-hand-side may
		be interchanged.
		
		(ii) If $\mathcal{X}$ is a Hilbert space, the $X_{i}$ are iid, $n\geq \ln
		\left( 1/\delta \right) \geq \ln 2$, then with probability at least $%
		1-\delta $%
		\begin{equation}
		\left\Vert \frac{1}{n}\sum_{i}X_{i}-\mathbb{E}\left[ X_{1}^{\prime }\right]
		\right\Vert \leq 8e\left\Vert \left\Vert X_{1}\right\Vert \right\Vert _{\psi
			_{1}}\sqrt{\frac{2\ln \left( 1/\delta \right) }{n}}.
		\label{Hilbert space vector concentration}
		\end{equation}
		
		(iii) If $\mathcal{X}$ is a Hilbert space, the $X_{i}$ are iid, $0<\delta
		\leq 1/2$ and $p,q\in \left( 1,\infty \right) $ are conjugate exponents then
		with probability at least $1-\delta $%
		\begin{equation*}
		\left\Vert \frac{1}{n}\sum_{i}X_{i}-\mathbb{E}\left[ X_{1}^{\prime }\right]
		\right\Vert \leq 2\left\Vert \left\Vert X_{1}-\mathbb{E}\left[ X_{1}^{\prime
		}\right] \right\Vert \right\Vert _{2p}\sqrt{\frac{2\ln \left( 1/\delta
			\right) }{n}}+4eq\left\Vert \left\Vert X_{1}\right\Vert \right\Vert _{\psi
		_{1}}\frac{\ln \left( 1/\delta \right) }{n}.
	\end{equation*}
\end{proposition}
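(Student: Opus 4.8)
The plan is to apply Theorem \ref{Theorem Bernsteinoid} to the function $f\left( x\right) =\Vert n^{-1}\sum_{i}x_{i}-\mu \Vert $, where $\mu =\mathbb{E}\left[ X_{1}^{\prime }\right] $, and then to add back the expectation $\mathbb{E}\left[ f\left( X^{\prime }\right) \right] $ that the theorem subtracts; since the left-hand side is a norm we only need the upper tail. As $f$ is the composition of the norm with an affine map, for fixed $\left( x_{i}:i\neq k\right) $ the $k$-th centered conditional version has the form $f_{k}\left( X\right) \left( x\right) =h\left( X_{k}\right) -\mathbb{E}\left[ h\left( X_{k}^{\prime }\right) \right] $, where $h\left( z\right) =\Vert c+z/n\Vert $ is convex and $\left( 1/n\right) $-Lipschitz and $c$ depends only on the fixed coordinates. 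I would estimate the scale-proxy $\max_{k}\Vert \Vert f_{k}\left( X\right) \Vert _{\psi _{1}}\Vert _{\infty }$, the variance-proxy $\Vert \sum_{k}\Vert f_{k}\left( X\right) \Vert _{2p}^{2}\Vert _{\infty }$ and the mean in turn.

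For the scale-proxy I would apply Lemma \ref{Lemma conditional and centering}(i) with $\phi \left( X_{k},X_{k}^{\prime }\right) =h\left( X_{k}\right) -h\left( X_{k}^{\prime }\right) $, whose conditional expectation given $X_{k}$ is exactly $f_{k}\left( X\right) \left( x\right) $. Combined with the Lipschitz property and the triangle inequality this gives $\Vert f_{k}\left( X\right) \Vert _{\psi _{1}}\left( x\right) \leq \Vert h\left( X_{k}\right) -h\left( X_{k}^{\prime }\right) \Vert _{\psi _{1}}\leq n^{-1}\Vert \Vert X_{k}-X_{k}^{\prime }\Vert \Vert _{\psi _{1}}\leq 2n^{-1}\Vert \Vert X_{1}\Vert \Vert _{\psi _{1}}$. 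Thus the scale-proxy is at most $2n^{-1}\Vert \Vert X_{1}\Vert \Vert _{\psi _{1}}$, and its factor $2$ combines with the $2eq$ of Theorem \ref{Theorem Bernsteinoid} to give the coefficient $4eq$ of the second term.

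The variance-proxy is the crux. The same symmetrization would only yield $\Vert f_{k}\left( X\right) \Vert _{2p}\leq 2n^{-1}\Vert \Vert X_{1}-\mu \Vert \Vert _{2p}$, and this extra factor $2$ is fatal: carried through the quadratic it would force the first coefficient above $2$. To reach the sharp $\Vert f_{k}\left( X\right) \Vert _{2p}\left( x\right) \leq n^{-1}\Vert \Vert X_{1}-\mu \Vert \Vert _{2p}$ I would exploit convexity rather than mere Lipschitzness: the subgradient inequality gives pointwise $h\left( X_{k}\right) -\mathbb{E}\left[ h\left( X_{k}\right) \right] \leq \left\langle \nabla h\left( X_{k}\right) ,X_{k}-\mu \right\rangle $ with $\Vert \nabla h\left( X_{k}\right) \Vert \leq n^{-1}$, which controls the \emph{upper} tail of $f_{k}\left( X\right) $ by $n^{-1}\Vert X_{k}-\mu \Vert $. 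The main obstacle is to control the full $L_{2p}$-norm, hence also the lower tail, by the same quantity: the pointwise lower bound carries an additive $n^{-1}\mathbb{E}\Vert X_{k}-\mu \Vert $ that would reinstate the factor $2$, so a genuinely moment-based argument is required. I expect this to rest on a centering inequality $\Vert h\left( X_{k}\right) -\mathbb{E}\left[ h\left( X_{k}\right) \right] \Vert _{r}\leq n^{-1}\Vert \Vert X_{k}-\mu \Vert \Vert _{r}$ for convex $\left( 1/n\right) $-Lipschitz $h$, valid precisely because $r=2p>2$ (the statement fails at $r=1$, as one sees already for $h=\vert \cdot \vert $). Granting it, $\sum_{k}\Vert f_{k}\left( X\right) \Vert _{2p}^{2}\leq n^{-1}\Vert \Vert X_{1}-\mu \Vert \Vert _{2p}^{2}$.

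Finally, for the mean I would use Hilbert-space orthogonality: $\mathbb{E}\left[ f\left( X^{\prime }\right) \right] =\mathbb{E}\Vert n^{-1}\sum_{i}\left( X_{i}^{\prime }-\mu \right) \Vert \leq ( \mathbb{E}\Vert n^{-1}\sum_{i}( X_{i}^{\prime }-\mu ) \Vert ^{2}) ^{1/2}=n^{-1/2}\Vert \Vert X_{1}-\mu \Vert \Vert _{2}\leq n^{-1/2}\Vert \Vert X_{1}-\mu \Vert \Vert _{2p}$, by Jensen and the vanishing of the cross terms. Solving the quadratic of Theorem \ref{Theorem Bernsteinoid}, which bounds $f\left( X\right) -\mathbb{E}\left[ f\left( X^{\prime }\right) \right] $ by $\sqrt{AL}+BL$ with $L=\ln \left( 1/\delta \right) $, $A$ twice the variance-proxy and $B$ the scale term, and adding the mean gives with probability at least $1-\delta $ that $f\left( X\right) \leq n^{-1/2}\Vert \Vert X_{1}-\mu \Vert \Vert _{2p}+\Vert \Vert X_{1}-\mu \Vert \Vert _{2p}\sqrt{2L/n}+4eq\Vert \Vert X_{1}\Vert \Vert _{\psi _{1}}L/n$. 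Since $\delta \leq 1/2$ forces $n^{-1/2}\leq \sqrt{2L/n}$, the first two terms combine into $2\Vert \Vert X_{1}-\mu \Vert \Vert _{2p}\sqrt{2L/n}$, which is exactly the asserted bound.
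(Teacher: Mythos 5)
Your proposal covers only part (iii) of the Proposition; parts (i) and (ii) are not addressed at all. The paper proves (i) by applying Theorem \ref{Theorem subexponential} to $f(x)=\left\Vert \sum_i x_i\right\Vert$ with the pointwise bound $\left\vert f_k(X)(x)\right\vert \leq \mathbb{E}\left[\left\Vert X_k-X_k^{\prime}\right\Vert \mid X\right]$ and Lemma \ref{Lemma conditional and centering}, and proves (ii) by the same device together with the Jensen/orthogonality bound on the mean and the hypotheses $n\geq \ln(1/\delta)\geq \ln 2$. Even if these are routine variations on your part (iii) argument, a proof of the Proposition as stated must include them.

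For part (iii), writing $\mu=\mathbb{E}\left[X_1^{\prime}\right]$, your strategy coincides with the paper's: apply Theorem \ref{Theorem Bernsteinoid} to the norm of the centered average, bound the scale-proxy by symmetrization (the factor $2$ there correctly produces the $4eq$), bound the mean by $n^{-1/2}\left\Vert \left\Vert X_1-\mu\right\Vert\right\Vert_{2}\leq n^{-1/2}\left\Vert\left\Vert X_1-\mu\right\Vert\right\Vert_{2p}$, and absorb it into the deviation term using $\delta\leq 1/2$; the closing algebra is correct. The genuine gap is exactly where you place it: the variance-proxy estimate $\left\Vert f_k(X)\right\Vert_{2p}(x)\leq n^{-1}\left\Vert\left\Vert X_1-\mu\right\Vert\right\Vert_{2p}$ with coefficient $1$. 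You do not prove it; you \emph{expect} it to follow from a centering inequality for convex $(1/n)$-Lipschitz functions at $r=2p>2$, and, as you yourself observe, the subgradient argument controls only the upper tail of $f_k(X)$, whereas the entropy bound behind Theorem \ref{Theorem Bernsteinoid} requires the full two-sided moment $\left\Vert f_k(X)^2\right\Vert_p=\left\Vert f_k(X)\right\Vert_{2p}^2$. No such moment comparison is stated, referenced, or obviously true. With the bound you do justify, $\left\Vert f_k(X)\right\Vert_{2p}\leq n^{-1}\left\Vert\left\Vert X_k-X_k^{\prime}\right\Vert\right\Vert_{2p}\leq 2n^{-1}\left\Vert\left\Vert X_1-\mu\right\Vert\right\Vert_{2p}$, the first coefficient in the conclusion becomes $3$ rather than $2$. (For what it is worth, the paper's supplement asserts the coefficient-$1$ variance proxy without comment, so you have correctly isolated the one delicate step; but as written your argument does not close it, and a complete proof must either establish the claimed moment inequality or accept the weaker constant.)
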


The purpose of the simple inequality (\ref{Hilbert space vector
	concentration}) is to give a compact expression, when it is possible to
restrict to the sub-Gaussian regime with the assumption $n\geq \ln \left(
1/\delta \right) $. This is often possible in applications. Part (iii)\
gives better estimation bounds when the lower order moments $\left\Vert
\cdot \right\Vert _{2p}$ ar small.

\begin{proof}
	(i) We look at the function $f\left( x\right) =\left\Vert
	\sum_{i}x_{i}\right\Vert $. Then 
	\begin{equation*}
	\left\vert f_{k}\left( X\right) \left( x\right) \right\vert =\left\vert
	\left\Vert \sum_{i\neq k}x_{i}+X_{k}\right\Vert -\mathbb{E}\left[ \left\Vert
	\sum_{i\neq k}x_{i}+X_{k}^{\prime }\right\Vert \right] \right\vert \leq 
	\mathbb{E}\left[ \left\Vert X_{k}-X_{k}^{\prime }\right\Vert |X\right] .
	\end{equation*}%
	Observe that the bound on $f_{k}\left( X\right) \left( x\right) $ (not $%
	f_{k}\left( X\right) \left( x\right) $ itself) is independent of $x$. Using
	Lemma \ref{Lemma conditional and centering} we get%
	\begin{equation*}
	\left\Vert f_{k}\left( X\right) \left( x\right) \right\Vert _{\psi _{1}}\leq
	\left\Vert \left\Vert X_{k}-X_{k}^{\prime }\right\Vert \right\Vert _{\psi
		_{1}}\leq 2\left\Vert \left\Vert X_{k}\right\Vert \right\Vert _{\psi _{1}},
	\end{equation*}%
	and the first conclusion follows from Theorem \ref{Theorem subexponential}
	by equating the probability to $\delta $ and solving for $t$. The proofs of
	(ii) and (iii) follow a similar pattern and are given in the supplement.
\end{proof}

\subsection{A uniform bound for PSA}

With the results of the previous section it is very easy to obtain a uniform
bound for principal subspace selection (PSA, sometimes PCA is used instead)
with sub-Gaussian data. In PSA we look for a projection onto a $d$%
-dimensional subspace which most faithfully represents the data. Let $H$ be
a Hilbert-space, $X_{i}$ iid with values in $H$ and $\mathcal{P}_{d}$ the
set of $d$-dimensional orthogonal projection operators in $H$. For $x\in H$
and $P\in \mathcal{P}_{d}$ the reconstruction error is $\ell \left(
P,x\right) :=\left\Vert Px-x\right\Vert _{H}^{2}$. We give a bound on the
estimation difference between the expected and the empirical reconstruction
error, uniform for projections in $\mathcal{P}_{d}$.

\begin{theorem}
	With $X=\left( X_{1},...,X_{n}\right) $ iid and $n\geq \ln \left( 1/\delta
	\right) \geq \ln 2$ we have with probability at least $1-\delta $%
	\begin{equation*}
	\sup_{P\in \mathcal{P}_{d}}\frac{1}{n}\sum_{i}\mathbb{E}\left[ \ell \left(
	P,X_{1}\right) \right] -\ell \left( P,X_{i}\right) \leq 16e\left( \sqrt{d}%
	+1\right) \left\Vert \left\Vert X_{1}\right\Vert \right\Vert _{\psi _{2}}^{2}%
	\sqrt{\frac{2\ln \left( 2/\delta \right) }{n}}.
	\end{equation*}
\end{theorem}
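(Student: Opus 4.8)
The plan is to linearize the reconstruction error and then reduce everything to the vector-valued concentration already established in Proposition \ref{Proposition vector concentration}. Since $P$ is an orthogonal projection, $\ell(P,x)=\|(I-P)x\|_H^2=\|x\|_H^2-\langle x,Px\rangle_H=\|x\|_H^2-\langle P,Q_x\rangle_{HS}$, where $Q_x$ is the rank-one operator from the Notation section and I have used $\langle x,Px\rangle_H=\langle P,Q_x\rangle_{HS}$. Substituting this and noting that $\mathbb E[\ell(P,X_1)]$ does not depend on the summation index, the quantity to be controlled becomes
\[
\sup_{P\in\mathcal P_d}\Big[\mathbb E[\ell(P,X_1)]-\tfrac1n\sum_i\ell(P,X_i)\Big]=\Big(\mathbb E\|X_1\|_H^2-\tfrac1n\sum_i\|X_i\|_H^2\Big)+\sup_{P\in\mathcal P_d}\langle P,A\rangle_{HS},
\]
where $A:=\tfrac1n\sum_i Q_{X_i}-\mathbb E[Q_{X_1}]\in HS(H)$. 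The first bracket is independent of $P$, so the supremum only acts on the second term.

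Next I would decouple the dimension. For any $P\in\mathcal P_d$ the Cauchy--Schwarz inequality in $HS(H)$ gives $\langle P,A\rangle_{HS}\le\|P\|_{HS}\,\|A\|_{HS}$, and since $P$ is a rank-$d$ orthogonal projection, $\|P\|_{HS}^2=\operatorname{tr}(P)=d$. Hence $\sup_P\langle P,A\rangle_{HS}\le\sqrt d\,\|A\|_{HS}$, which is the source of the $\sqrt d$ in the statement. It then remains to bound the Hilbert-space norm $\|A\|_{HS}$ and the scalar fluctuation $|\mathbb E\|X_1\|_H^2-\tfrac1n\sum_i\|X_i\|_H^2|$, both of which are instances of Proposition \ref{Proposition vector concentration}(ii).

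For $\|A\|_{HS}$ I apply that proposition in the Hilbert space $\mathcal X=HS(H)$ to the iid operators $Q_{X_i}$: here $\|Q_{X_1}\|_{HS}=\|X_1\|_H^2$, so the relevant norm is $\| \,\|X_1\|_H^2\, \|_{\psi_1}$, which by the identity $\|Z^2\|_{\psi_1}\le 2\|Z\|_{\psi_2}^2$ (noted earlier, with $Z=\|X_1\|_H$) is at most $2\,\| \,\|X_1\|_H\, \|_{\psi_2}^2$. For the scalar term I apply the same part with $\mathcal X=\mathbb R$ to the iid variables $\|X_i\|_H^2$, whose $\psi_1$-norm is bounded by the same quantity. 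Each application, run at confidence $1-\delta/2$, yields a bound $16e\,\| \,\|X_1\|\, \|_{\psi_2}^2\sqrt{2\ln(2/\delta)/n}$ (the factor $16e$ being $8e\times 2$), one contributing the $\sqrt d$ term and the other the $1$. A union bound over the two events combines them into $16e(\sqrt d+1)\,\| \,\|X_1\|\, \|_{\psi_2}^2\sqrt{2\ln(2/\delta)/n}$, as claimed; the hypotheses $n\ge\ln(2/\delta)\ge\ln 2$ required by Proposition \ref{Proposition vector concentration}(ii) at confidence $\delta/2$ are covered by the stated assumption on $n$ (up to the harmless replacement of $\ln(1/\delta)$ by $\ln(2/\delta)$).

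The conceptual step I expect to be least routine is recognizing that the supremum over all $d$-dimensional projections should not be attacked by a covering or Rademacher argument over $\mathcal P_d$, but collapsed algebraically: writing $\ell$ as an $HS$-inner product against $Q_x$ turns the empirical-process supremum into $\sqrt d\,\|A\|_{HS}$ via one line of Cauchy--Schwarz, after which the entire problem is exactly the concentration of $n^{-1}\sum_iQ_{X_i}$ about its mean that Proposition \ref{Proposition vector concentration} already delivers. Verifying the elementary facts $\langle x,Px\rangle_H=\langle P,Q_x\rangle_{HS}$, $\|Q_x\|_{HS}=\|x\|_H^2$ and $\|P\|_{HS}=\sqrt d$, together with the bookkeeping of the two confidence budgets, are the only remaining details.
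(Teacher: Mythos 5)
Your proposal is correct and follows essentially the same route as the paper: the identity $\ell(P,x)=\left\Vert Q_{x}\right\Vert _{HS}-\left\langle P,Q_{x}\right\rangle _{HS}$, the Cauchy--Schwarz step with $\left\Vert P\right\Vert _{HS}=\sqrt{d}$, two applications of Proposition \ref{Proposition vector concentration}~(ii) (one in $HS(H)$, one in $\mathbb{R}$), the bound $\left\Vert \left\Vert X_{1}\right\Vert ^{2}\right\Vert _{\psi _{1}}\leq 2\left\Vert \left\Vert X_{1}\right\Vert \right\Vert _{\psi _{2}}^{2}$, and a union bound. The only point worth noting, which you already flag and which the paper glosses over equally, is the slight mismatch between the stated hypothesis $n\geq \ln \left( 1/\delta \right) $ and the $n\geq \ln \left( 2/\delta \right) $ formally needed when running each application at confidence $\delta /2$.
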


\begin{proof}
	It is convenient to work in the space of Hilbert-Schmidt operators $HS\left(
	H\right) $, where we can write $\ell \left( P,x\right) =\left\Vert
	Q_{x}\right\Vert _{HS}-\left\langle P,Q_{x}\right\rangle _{HS}$. Then%
	\begin{multline*}
	\sup_{P\in \mathcal{P}_{d}}\frac{1}{n}\sum_{i}\mathbb{E}\left[ \ell \left(
	P,X_{1}\right) \right] -\ell \left( P,X_{i}\right)  \\
	=\sup_{P\in \mathcal{P}_{d}}\left\langle P,\frac{1}{n}\sum_{i}\left(
	Q_{X_{i}}-\mathbb{E}\left[ Q_{X_{i}}\right] \right) \right\rangle
	_{HS}+\left( \mathbb{E}\left\Vert Q_{X_{i}}\right\Vert _{HS}-\frac{1}{n}%
	\sum_{i}\left\Vert Q_{X_{i}}\right\Vert _{HS}\right) .
	\end{multline*}%
	Since for $P\in \mathcal{P}_{d}$ we have $\left\Vert P\right\Vert _{HS}=%
	\sqrt{d}$, we can use Cauchy-Schwarz and Proposition \ref{Proposition vector
		concentration} (ii) to bound the first term above with probability at least $%
	1-\delta $ as 
	\begin{equation*}
	\sqrt{d}\left\Vert \frac{1}{n}\sum_{i}\left( Q_{X_{i}}-\mathbb{E}\left[
	Q_{X_{1}}\right] \right) \right\Vert _{HS}\leq 8e\sqrt{d}\left\Vert
	\left\Vert Q_{X_{1}}\right\Vert _{HS}\right\Vert _{\psi _{1}}\sqrt{\frac{%
			2\ln \left( 1/\delta \right) }{n}}.
	\end{equation*}%
	The remaining term is bounded by applying the same result to the random
	vectors $\left\Vert Q_{X_{i}}\right\Vert _{HS}$ in the Hilbert space $%
	\mathbb{R}$ (note that this just involves a sum of sub-exponential variables
	and could already be handled with Theorem 2.8.1 in \cite{vershynin2018high}%
	). The result follows from combining both bounds in a union bound and noting
	that $\left\Vert \left\Vert Q_{X_{1}}\right\Vert _{HS}\right\Vert _{\psi
		_{1}}=\left\Vert \left\Vert X_{1}\right\Vert ^{2}\right\Vert _{\psi
		_{1}}\leq 2\left\Vert \left\Vert X_{1}\right\Vert \right\Vert _{\psi
		_{2}}^{2}$.
\end{proof}

\subsection{Generalization with Rademacher complexities}

Suppose that $\mathcal{H}$ is a class of functions $h:\mathcal{X}\rightarrow 
\mathbb{R}$. We seek a high-probability bound on the random variable%
\begin{equation*}
f\left( X\right) =\sup_{h\in \mathcal{H}}\frac{1}{n}\sum_{i=1}^{n}\left(
h\left( X_{i}\right) -\mathbb{E}\left[ h\left( X_{i}^{\prime }\right) \right]
\right) .
\end{equation*}%
The now classical method of Rademacher complexities (\cite{Bartlett02}, \cite%
{Koltchinskii00}) writes $f\left( X\right) $ as the sum 
\begin{equation}
f\left( X\right) =\left( f\left( X\right) -\mathbb{E}\left[ f\left(
X^{\prime }\right) \right] \right) +\mathbb{E}\left[ f\left( X^{\prime
}\right) \right]   \label{Rademacher decomposition}
\end{equation}%
and bounds the two terms separately. The first term is bounded using a
concentration inequality, the second term $\mathbb{E}\left[ f\left( X\right) %
\right] $ is bounded by symmetrization. If the $\epsilon _{i}$ are
independent Rademacher variables, uniformly distributed on $\left\{
-1,1\right\} ,$ then 
\begin{equation*}
\mathbb{E}\left[ f\left( X\right) \right] \leq \mathbb{E}\left[ \frac{2}{n}%
\mathbb{E}\left[ \sup_{h\in \mathcal{H}}\sum_{i}\epsilon _{i}h\left(
X_{i}\right) |X\right] \right] =:\mathbb{E}\left[ \mathcal{R}\left( \mathcal{%
	H},X\right) \right] .
\end{equation*}%
Further bounds on this quantity depend on the class in question, but they do
not necessarily require the $h\left( X_{i}\right) $ to be bounded random
variables, Lipschitz properties being more relevant. For the first term $%
f\left( X\right) -\mathbb{E}\left[ f\left( X\right) \right] $, however, the
classical approach uses the bounded difference inequality, which requires
boundedness. We now show that boundedness can be replaced by sub-exponential
distributions for uniformly Lipschitz function classes.

\begin{theorem}
	\label{Theorem Rademacher application}Let $X=\left( X_{1},...,X_{n}\right) $
	be iid random variables with values in a Banach space $\left( \mathcal{X}%
	,\left\Vert \cdot \right\Vert \right) $ and let $\mathcal{H}$ be a class of
	functions $h:\mathcal{X}\rightarrow \mathbb{R}$ such that $h\left( x\right)
	-h\left( y\right) \leq L\left\Vert x-y\right\Vert $ for all $h\in \mathcal{H}
	$ and $x,y\in \mathcal{X}$. If $n\geq \ln \left( 1/\delta \right) $ then
	with probability at least $1-\delta $%
	\begin{equation*}
	\sup_{h\in \mathcal{H}}\frac{1}{n}\sum_{i}h\left( X_{i}\right) -\mathbb{E}%
	\left( h\left( X\right) \right) \leq \mathbb{E}\left[ \mathcal{R}\left( 
	\mathcal{H},X\right) \right] +16eL\left\Vert \left\Vert X_{1}\right\Vert
	\right\Vert _{\psi _{1}}\sqrt{\frac{\ln \left( 1/\delta \right) }{n}}.
	\end{equation*}
\end{theorem}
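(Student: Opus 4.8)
The plan is to run the classical Rademacher argument laid out just before the statement, replacing the bounded difference inequality by Theorem \ref{Theorem subexponential}. Set $f(X)=\sup_{h\in\mathcal H}\frac1n\sum_i\bigl(h(X_i)-\mathbb E[h(X_i')]\bigr)$, which equals $\sup_{h}\frac1n\sum_i h(X_i)-\mathbb E[h(X_1')]$ because the $X_i$ are iid, and decompose it as in (\ref{Rademacher decomposition}): $f(X)=\bigl(f(X)-\mathbb E[f(X')]\bigr)+\mathbb E[f(X')]$. The symmetrization inequality recalled above the theorem already gives $\mathbb E[f(X')]=\mathbb E[f(X)]\le\mathbb E[\mathcal R(\mathcal H,X)]$, so the entire task reduces to a high-probability upper bound on the deviation $f(X)-\mathbb E[f(X')]$.

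First I would control the centered conditional versions $f_k(X)$. Fixing the coordinates $(x_i:i\neq k)$ and writing $g(u)=f(x_1,\dots,x_{k-1},u,x_{k+1},\dots,x_n)$, the elementary fact that a difference of two suprema is at most the supremum of the differences, combined with the uniform Lipschitz hypothesis, gives $|g(u)-g(v)|\le\sup_{h\in\mathcal H}\tfrac1n|h(u)-h(v)|\le\tfrac Ln\|u-v\|$, so $g$ is $\tfrac Ln$-Lipschitz. Since $f_k(X)(x)=g(X_k)-\mathbb E[g(X_k')]=\mathbb E\bigl[g(X_k)-g(X_k')\mid X_k\bigr]$ with $X_k'$ an independent copy, Lemma \ref{Lemma conditional and centering}(i) with $\alpha=1$, the triangle inequality $\|X_k-X_k'\|\le\|X_k\|+\|X_k'\|$, and sub-additivity of $\|\cdot\|_{\psi_1}$ yield $\|f_k(X)\|_{\psi_1}\le\tfrac Ln\,\bigl\|\,\|X_k-X_k'\|\,\bigr\|_{\psi_1}\le\tfrac{2L}{n}\bigl\|\,\|X_1\|\,\bigr\|_{\psi_1}$, uniformly in $x$. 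I expect this to be the main obstacle, since it is the step where the supremum over $\mathcal H$ is collapsed to a single-coordinate Lipschitz estimate and the centering is absorbed into the sub-exponential norm; everything after it is bookkeeping.

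With $\sigma:=\bigl\|\,\|X_1\|\,\bigr\|_{\psi_1}$, the variance-proxy is $\bigl\|\sum_k\|f_k(X)\|_{\psi_1}^2\bigr\|_\infty\le n\,(2L\sigma/n)^2=4L^2\sigma^2/n=:V$ and the scale-proxy is $\max_k\bigl\|\,\|f_k(X)\|_{\psi_1}\bigr\|_\infty\le 2L\sigma/n=:C$. Feeding these into Theorem \ref{Theorem subexponential}, equating the bound to $\delta$ and solving the resulting quadratic for $t$ gives $t\le 2e\sqrt{V\ln(1/\delta)}+2eC\ln(1/\delta)$. Finally, the hypothesis $n\ge\ln(1/\delta)$ makes $\ln(1/\delta)/n\le\sqrt{\ln(1/\delta)/n}$, which lets me absorb the linear sub-exponential term into the sub-Gaussian $\sqrt{\ln(1/\delta)/n}$ term and collect both into a single expression of the stated form, a constant multiple of $eL\sigma\sqrt{\ln(1/\delta)/n}$. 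Combining this deviation bound with $\mathbb E[f(X')]\le\mathbb E[\mathcal R(\mathcal H,X)]$ through the decomposition then gives the claim with probability at least $1-\delta$.
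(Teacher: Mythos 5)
Your argument is correct, and it reaches the stated bound (in fact with the better constant $8e$ in place of $16e$). It does, however, take a different route from the paper's. The paper lifts the problem into the Banach space $\mathcal{B}=\ell ^{\infty }\left( \mathcal{H}\right) $ of bounded functions on $\mathcal{H}$: it defines $\hat{X}_{i}\left( h\right) =\frac{1}{n}\left( h\left( X_{i}\right) -\mathbb{E}\left[ h\left( X_{i}^{\prime }\right) \right] \right) $, observes $f\left( X\right) =\left\Vert \sum_{i}\hat{X}_{i}\right\Vert _{\mathcal{B}}$ with $\left\Vert \left\Vert \hat{X}_{i}\right\Vert _{\mathcal{B}}\right\Vert _{\psi _{1}}\leq \frac{2L}{n}\left\Vert \left\Vert X_{1}\right\Vert \right\Vert _{\psi _{1}}$, and then invokes the already-proved vector concentration result, Proposition \ref{Proposition vector concentration}; this is modular but pays the centering factor of $2$ twice (once inside $\hat{X}_{i}$, once when bounding the conditional versions of the norm of the sum), which is where the $16e$ comes from. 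You instead work directly with the centered conditional versions of the supremum function: the inequality $\sup_{h}a_{h}-\sup_{h}b_{h}\leq \sup_{h}\left( a_{h}-b_{h}\right) $ plus the uniform Lipschitz hypothesis shows each single-coordinate section is $\frac{L}{n}$-Lipschitz, and Lemma \ref{Lemma conditional and centering}(i) then gives $\left\Vert f_{k}\left( X\right) \right\Vert _{\psi _{1}}\leq \frac{2L}{n}\left\Vert \left\Vert X_{1}\right\Vert \right\Vert _{\psi _{1}}$ uniformly in $x$, after which Theorem \ref{Theorem subexponential}, inversion of the tail, and the hypothesis $n\geq \ln \left( 1/\delta \right) $ finish the proof. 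This is precisely the alternative anticipated in Remark 2 following the theorem: it avoids the detour through $\mathcal{B}$ and improves the constant by a factor of $2$, at the cost of redoing the supremum-collapsing estimate rather than reusing Proposition \ref{Proposition vector concentration}. Your feeding of the variance- and scale-proxies into Theorem \ref{Theorem subexponential} and the absorption of the $\ln \left( 1/\delta \right) /n$ term are all sound.
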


\begin{proof}
	The vector space 
	\begin{equation*}
	\mathcal{B}=\left\{ g:\mathcal{H}\rightarrow \mathbb{R}:\sup_{h\in \mathcal{H%
		}}\left\vert g\left( h\right) \right\vert <\infty \right\} 
		\end{equation*}%
		becomes a normed space with norm $\left\Vert g\right\Vert _{\mathcal{B}%
		}=\sup_{h\in \mathcal{H}}\left\vert g\left( h\right) \right\vert $\textbf{\ }%
		. For each $X_{i}$ define $\hat{X}_{i}$\textbf{\ }$\in \mathcal{B}$ by $\hat{%
			X}_{i}\left( h\right) =\left( 1/n\right) \left( h\left( X_{i}\right) -%
		\mathbb{E}\left[ h\left( X_{i}^{\prime }\right) \right] \right) $. Then the $%
		\hat{X}_{i}$ are zero mean random variables in $\mathcal{B}$ and $f\left(
		X\right) =\left\Vert \sum_{i}\hat{X}_{i}\right\Vert _{\mathcal{B}}$. Also
		with Lemma \ref{Lemma conditional and centering} and the iid-assumption%
		\begin{eqnarray*}
			\left\Vert \left\Vert \hat{X}_{i}\right\Vert _{\mathcal{B}}\right\Vert
			_{\psi _{\alpha }} &=&\frac{1}{n}\left\Vert \sup_{h}\left( \mathbb{E}\left[
			h\left( X_{i}\right) -h\left( X_{i}^{\prime }\right) \right] |X\right)
			\right\Vert _{\psi _{\alpha }} \\
			&\leq &\frac{L}{n}\left\Vert \mathbb{E}\left[ \left\Vert X_{i}-X_{i}^{\prime
			}\right\Vert \right] |X\right\Vert _{\psi _{\alpha }}\leq \frac{2L}{n}%
			\left\Vert \left\Vert X_{1}\right\Vert \right\Vert _{\psi _{\alpha }},
		\end{eqnarray*}%
		and from Proposition \ref{Proposition vector concentration} (ii) we get with
		probability at least $1-\delta $%
		\begin{equation*}
		f\left( X\right) -\mathbb{E}\left[ f\left( X^{\prime }\right) \right] \leq
		16eL\left\Vert \left\Vert X_{1}\right\Vert \right\Vert _{\psi _{1}}\sqrt{%
			\frac{\ln \left( 1/\delta \right) }{n}}.
		\end{equation*}%
		The result follows from (\ref{Rademacher decomposition}).
	\end{proof}
	
	\textbf{Remarks.} 1. A possible candidate for $\mathcal{H}$ would be a ball
	of radius $L$ in the dual space $\mathcal{X}^{\ast }$, composed with
	Lipschitz functions, like the hinge-loss.
	
	2. If, instead of using Proposition \ref{Proposition vector concentration},
	one directly considers the centered conditional versions of $f\left(
	X\right) $, the constants above can be improved at the expense of a slightly
	more complicated proof. 
	
	3. A corresponding sub-Gaussian result can be supplied along the same lines
	by using Theorem \ref{Theorem subgaussian} instead of Theorem \ref{Theorem
		subexponential}. Such a result has been given in \cite{Meir03}, Theorem 3,
	using a sub-Gaussian condition which involves the supremum over the function
	class. The sub-exponential bound above is new as far as we know, and in the
	relevant regime $n>\ln \left( 1/\delta \right) $ it improves over the
	sub-Gaussian case, since $\left\Vert \left\Vert X_{1}\right\Vert \right\Vert
	_{\psi _{1}}\leq \left\Vert \left\Vert X_{1}\right\Vert \right\Vert _{\psi
		_{2}}$.
	
	As a concrete case consider linear regression with potentially unbounded
	data. Let $\mathcal{X}=\left( H,\mathbb{R}\right) $, where $H$ is a
	Hilbert-space with inner product $\left\langle .,.\right\rangle $ and norm $%
	\left\Vert .\right\Vert _{H}$, and let $X_{1}$ and $Z_{1}$ be each
	sub-exponential random variables in $H$ and $\mathbb{R}$ respectively. The
	pair $\left( X_{1},Z_{1}\right) $ represents the joint occurrence of
	input-vectors $X_{1}$ and real outputs $Z_{1}$. On $\mathcal{X}$ we consider
	the class $\mathcal{H}$ of functions $\mathcal{H}=\left\{ \left( x,z\right)
	\mapsto h\left( x,z\right) =\ell \left( \left\langle w,x\right\rangle
	-z\right) :\left\Vert w\right\Vert _{H}\leq L\right\} $, where $\ell $ is a $%
	1$-Lipschitz loss function, like the absolute error or the Huber loss.
	
	\begin{corollary}
		\label{Corollary regression}Let $\mathcal{X}$ and $\mathcal{H}$ be as above
		and $\left( X,Z\right) =\left( \left( X_{1},Z_{1}\right) ,...,\left(
		X_{n},Z_{n}\right) \right) $ be an iid sample of random variables in $%
		\mathcal{X}$. Then for $\delta >0$ and $n\geq \ln \left( 1/\delta \right) $ 
		\begin{equation*}
		\sup_{h\in \mathcal{H}}\frac{1}{n}\sum_{i}h\left( X_{i},Z_{i}\right) -%
		\mathbb{E}\left( h\left( X_{i},Z_{i}\right) \right) \leq \frac{8}{\sqrt{n}}%
		\left( L\left\Vert \left\Vert X_{1}\right\Vert \right\Vert _{\psi
			_{1}}+\left\Vert \left\Vert Z_{1}\right\Vert \right\Vert _{\psi _{1}}\right)
		\left( 1+2e\sqrt{\ln \left( 1/\delta \right) }\right) .
		\end{equation*}
	\end{corollary}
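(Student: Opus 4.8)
The plan is to reduce the corollary to a direct application of Theorem \ref{Theorem Rademacher application}, treating the pair $\left( X_{1},Z_{1}\right) $ as a single data point in the product Banach space $\mathcal{X}=H\times \mathbb{R}$. The key preliminary choice is the norm on $\mathcal{X}$: I would equip it with $\left\Vert \left( x,z\right) \right\Vert =L\left\Vert x\right\Vert _{H}+\left\vert z\right\vert $, which turns every $h\in \mathcal{H}$ into a $1$-Lipschitz function, since for $h\left( x,z\right) =\ell \left( \left\langle w,x\right\rangle -z\right) $ with $\left\Vert w\right\Vert _{H}\leq L$ and $\ell $ being $1$-Lipschitz we get $\left\vert h\left( x,z\right) -h\left( x^{\prime },z^{\prime }\right) \right\vert \leq \left\vert \left\langle w,x-x^{\prime }\right\rangle -\left( z-z^{\prime }\right) \right\vert \leq L\left\Vert x-x^{\prime }\right\Vert _{H}+\left\vert z-z^{\prime }\right\vert $. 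The triangle inequality for $\left\Vert \cdot \right\Vert _{\psi _{1}}$ then gives $\left\Vert \left\Vert \left( X_{1},Z_{1}\right) \right\Vert \right\Vert _{\psi _{1}}\leq L\left\Vert \left\Vert X_{1}\right\Vert \right\Vert _{\psi _{1}}+\left\Vert \left\Vert Z_{1}\right\Vert \right\Vert _{\psi _{1}}=:A$, so Theorem \ref{Theorem Rademacher application} (applicable since $n\geq \ln \left( 1/\delta \right) $) immediately yields a bound of the form $\mathbb{E}\left[ \mathcal{R}\left( \mathcal{H},\left( X,Z\right) \right) \right] +16eA\sqrt{\ln \left( 1/\delta \right) /n}$, the symmetrization of the expectation already being folded into that theorem.

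It then remains to show $\mathbb{E}\left[ \mathcal{R}\left( \mathcal{H},\left( X,Z\right) \right) \right] \leq 8A/\sqrt{n}$, which is where the actual work lies. I would first apply the Ledoux-Talagrand contraction principle to strip off the $1$-Lipschitz loss $\ell $; to handle $\ell \left( 0\right) \neq 0$ cleanly, I would subtract the constant $\ell \left( 0\right) $ first, noting that the residual term $\ell \left( 0\right) \sum_{i}\epsilon _{i}$ is independent of $w$ and vanishes in $\epsilon $-expectation. Since $\sum_{i}\epsilon _{i}\left( \left\langle w,X_{i}\right\rangle -Z_{i}\right) =\left\langle w,\sum_{i}\epsilon _{i}X_{i}\right\rangle -\sum_{i}\epsilon _{i}Z_{i}$ and the supremum over $\left\Vert w\right\Vert _{H}\leq L$ equals $L\left\Vert \sum_{i}\epsilon _{i}X_{i}\right\Vert _{H}$ plus a $w$-free term of zero mean, the Rademacher complexity collapses to $\left( 2L/n\right) \mathbb{E}\left\Vert \sum_{i}\epsilon _{i}X_{i}\right\Vert _{H}$.

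The final estimate uses the Hilbert-space structure: by Jensen and orthogonality of the $\epsilon _{i}X_{i}$ in expectation, $\mathbb{E}\left\Vert \sum_{i}\epsilon _{i}X_{i}\right\Vert _{H}\leq \sqrt{\mathbb{E}\left\Vert \sum_{i}\epsilon _{i}X_{i}\right\Vert _{H}^{2}}=\sqrt{n\mathbb{E}\left\Vert X_{1}\right\Vert ^{2}}=\sqrt{n}\left\Vert \left\Vert X_{1}\right\Vert \right\Vert _{2}$, and the defining formula $\left\Vert \cdot \right\Vert _{\psi _{1}}=\sup_{p\geq 1}\left\Vert \cdot \right\Vert _{p}/p$ gives $\left\Vert \left\Vert X_{1}\right\Vert \right\Vert _{2}\leq 2\left\Vert \left\Vert X_{1}\right\Vert \right\Vert _{\psi _{1}}$. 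Hence $\mathbb{E}\left[ \mathcal{R}\right] \leq 4L\left\Vert \left\Vert X_{1}\right\Vert \right\Vert _{\psi _{1}}/\sqrt{n}\leq 8A/\sqrt{n}$, and adding the concentration term produces $\left( 8A/\sqrt{n}\right) \left( 1+2e\sqrt{\ln \left( 1/\delta \right) }\right) $, which is exactly the claim. The main obstacle is the Rademacher-complexity computation — specifically applying contraction correctly in the presence of the constant $\ell \left( 0\right) $ and the additive $Z$-term, which would otherwise be mishandled; everything else is bookkeeping of constants, which are comfortably loose.
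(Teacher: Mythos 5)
Your proposal is correct and follows essentially the same route as the paper: equip $\mathcal{X}=H\times\mathbb{R}$ with the norm $\left\Vert (x,z)\right\Vert =L\left\Vert x\right\Vert _{H}+\left\vert z\right\vert$, check the $1$-Lipschitz property and the triangle inequality for $\left\Vert \cdot\right\Vert _{\psi _{1}}$, invoke Theorem \ref{Theorem Rademacher application}, and bound $\mathbb{E}\left[ \mathcal{R}\right]$ by $8A/\sqrt{n}$ via Jensen and $\left\Vert \cdot\right\Vert _{2}\leq 2\left\Vert \cdot\right\Vert _{\psi _{1}}$. The only (harmless, in fact slightly sharper) deviation is in the Rademacher-complexity step, where you use Ledoux--Talagrand contraction to make the $Z$-term vanish in $\epsilon$-expectation, whereas the paper keeps both terms via the Lipschitz property, the triangle inequality and Jensen's inequality; both land comfortably within the stated constant.
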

	
	The proof of this corollary is given in the supplement.
	
	\subsection{Unbounded metric spaces and algorithmic stability}
	
	We use Theorem 2 to extend a method of Kontorovich \cite%
	{kontorovich2014concentration} from sub-Gaussian to sub-exponential
	distributions. If $\left( \mathcal{X},d,\mu \right) $ is a metric
	probability space and $X,X^{\prime }\sim \mu $ are iid random variables with
	values in $\mathcal{X}$, Kontorovich defines the sub-Gaussian diameter of $%
	\left( \mathcal{X},d\right) $ as the optimal sub-Gaussian parameter of the
	random variable $\epsilon d\left( X,X^{\prime }\right) $, where $\epsilon $
	is a Rademacher variable. The Rademacher variable is needed in \cite%
	{kontorovich2014concentration} to work with centered random variables, which
	gives better constants. In our case we work with norms and we can more
	simply define the sub-Gaussian and sub-exponential diameters respectively as 
	\begin{equation*}
	\Delta _{\alpha }\left( \mathcal{X},d,\mu \right) =\left\Vert d\left(
	X,X^{\prime }\right) \right\Vert _{\psi _{\alpha }}\text{ for }\alpha \in
	\left\{ 1,2\right\} \text{ and independet }X^{\prime },X\sim \mu \text{. }
	\end{equation*}%
	Then Theorem \ref{Theorem subexponential} implies the following result, the
	easy proof of which is given in the supplement.
	
	\begin{theorem}
		\label{Theorem metric}For $1\leq i\leq n$ let $X_{i}$ be independent random
		variables distributed as $\mu _{i}$ in $\mathcal{X}$, $X=\left(
		X_{1},...,X_{n}\right) $, $X^{\prime }$ iid to $X$, and let $f:\mathcal{X}%
		^{n}\rightarrow \mathbb{R}$ have Lipschitz constant $L$ with respect to the
		metric $\rho $ on $\mathcal{X}^{n}$ defined by $\rho \left( x,y\right)
		=\sum_{i}d\left( x_{i},y_{i}\right) $. Then for $t>0$%
		\begin{equation*}
		\Pr \left\{ f\left( X\right) -\mathbb{E}\left[ f\left( X^{\prime }\right) %
		\right] >t\right\} \leq \exp \left( \frac{-t^{2}}{4eL^{2}\sum_{k}\Delta
			_{1}\left( \mathcal{X},d,\mu _{i}\right) ^{2}+2e\max_{i}\Delta _{1}\left( 
			\mathcal{X},d,\mu _{i}\right) t}\right) .
		\end{equation*}
	\end{theorem}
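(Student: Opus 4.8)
The plan is to deduce the claim directly from Theorem \ref{Theorem subexponential}, for which the only substantive work is to bound the sub-exponential norms of the centered conditional versions $f_{k}\left( X\right) $ in terms of the sub-exponential diameter $\Delta _{1}\left( \mathcal{X},d,\mu _{k}\right) $. First I would fix a point $x\in \mathcal{X}^{n}$ and isolate the dependence of $f$ on its $k$-th argument by setting $g\left( u\right) =f\left( x_{1},\dots ,x_{k-1},u,x_{k+1},\dots ,x_{n}\right) $. Since the metric $\rho $ changes by exactly $d\left( u,v\right) $ when only the $k$-th coordinate is moved from $u$ to $v$, the Lipschitz hypothesis gives $\left\vert g\left( u\right) -g\left( v\right) \right\vert \leq Ld\left( u,v\right) $.

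Next I would rewrite the centered conditional version as a conditional expectation of a symmetric difference. Letting $X_{k}^{\prime }$ be an independent copy of $X_{k}$, we have
\begin{equation*}
f_{k}\left( X\right) \left( x\right) =g\left( X_{k}\right) -\mathbb{E}\left[ g\left( X_{k}^{\prime }\right) \right] =\mathbb{E}\left[ g\left( X_{k}\right) -g\left( X_{k}^{\prime }\right) \mid X_{k}\right] .
\end{equation*}
Applying Lemma \ref{Lemma conditional and centering}(i) with $\phi \left( u,v\right) =g\left( u\right) -g\left( v\right) $, followed by the Lipschitz bound above, yields
\begin{equation*}
\left\Vert f_{k}\left( X\right) \left( x\right) \right\Vert _{\psi _{1}}\leq \left\Vert g\left( X_{k}\right) -g\left( X_{k}^{\prime }\right) \right\Vert _{\psi _{1}}\leq L\left\Vert d\left( X_{k},X_{k}^{\prime }\right) \right\Vert _{\psi _{1}}=L\Delta _{1}\left( \mathcal{X},d,\mu _{k}\right) .
\end{equation*}
The crucial point is that this bound does not depend on $x$, so that $\left\Vert \left\Vert f_{k}\left( X\right) \right\Vert _{\psi _{1}}\right\Vert _{\infty }\leq L\Delta _{1}\left( \mathcal{X},d,\mu _{k}\right) $.

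Finally I would substitute these uniform bounds into the variance- and scale-proxies of Theorem \ref{Theorem subexponential}, using $\left\Vert \sum_{k}\left\Vert f_{k}\left( X\right) \right\Vert _{\psi _{1}}^{2}\right\Vert _{\infty }\leq L^{2}\sum_{k}\Delta _{1}\left( \mathcal{X},d,\mu _{k}\right) ^{2}$ and $\max_{k}\left\Vert \left\Vert f_{k}\left( X\right) \right\Vert _{\psi _{1}}\right\Vert _{\infty }\leq L\max_{k}\Delta _{1}\left( \mathcal{X},d,\mu _{k}\right) $, and then read off the stated tail bound. The one step requiring thought is the rewriting in terms of $g\left( X_{k}\right) -g\left( X_{k}^{\prime }\right) $: this is what lets the centering already present in $f_{k}$ be absorbed without loss by Lemma \ref{Lemma conditional and centering}(i), while simultaneously converting the Lipschitz constant $L$ into a statement about the metric $d$ on which $\Delta _{1}$ is built. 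Everything else is routine substitution, so I expect no genuine obstacle beyond matching the numerical constants supplied by Theorem \ref{Theorem subexponential}.
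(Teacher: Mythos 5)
Your proposal is correct and follows essentially the same route as the paper: rewrite $f_{k}\left( X\right) \left( x\right)$ as a conditional expectation of the symmetrized difference, control its $\psi_{1}$-norm by $L\Delta_{1}\left( \mathcal{X},d,\mu_{k}\right)$ via Lemma \ref{Lemma conditional and centering}(i) together with the Lipschitz property, and substitute into Theorem \ref{Theorem subexponential}. The only (immaterial) difference is the order of the two steps --- the paper applies the Lipschitz bound inside the conditional expectation first and then Lemma \ref{Lemma conditional and centering}(i), whereas you apply the lemma first and then dominate by $Ld\left( X_{k},X_{k}^{\prime }\right)$.
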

	
	This is the sub-exponential counterpart to Theorem 1 of \cite%
	{kontorovich2014concentration}, a version of which could have been derived
	using Theorem \ref{Theorem subgaussian} in place of \ref{Theorem
		subexponential}. Our result can be equally substituted to establish
	generalization using the notion of total Lipschitz stability, just as in 
	\cite{kontorovich2014concentration}. We also note that Theorem 4 of the
	latter work also gives bounds for different Orlicz norms $\left\Vert
	.\right\Vert _{\psi _{p}}$, but it requires $p>1$, and the bounds
	deteriorate as $p\rightarrow 1$.
	
	\section{Proofs of Theorems \protect\ref{Theorem subgaussian} and \protect
		\ref{Theorem subexponential}\label{Section Proofs}}
	
	We first collect some necessary tools. Central to the entropy method is the
	entropy $S\left( Y\right) $ of a real valued random variable $Y$ defined as%
	\begin{equation*}
	S\left( Y\right) =\mathbb{E}_{Y}\left[ Y\right] -\ln \mathbb{E}\left[ e^{Y}%
	\right] ,
	\end{equation*}%
	where the tilted expectation $\mathbb{E}_{Y}$ is defined as $\mathbb{E}_{Y}%
	\left[ Z\right] =\mathbb{E}\left[ Ze^{Y}\right] /\mathbb{E}\left[ e^{Y}%
	\right] $. The logarithm of the moment generating function can be expressed
	in terms of the entropy as%
	\begin{equation}
	\ln \mathbb{E}\left[ e^{\beta \left( Y-\mathbb{E}\left[ Y\right] \right) }%
	\right] =\beta \int_{0}^{\beta }\frac{S\left( \gamma Y\right) d\gamma }{%
		\gamma ^{2}}  \label{Log moment generating function bound}
	\end{equation}%
	(Theorem 1 in \cite{Maurer12}). If $f:\mathcal{X}^{n}\rightarrow \mathbb{R}$
	and $X$ and the $f_{k}$ are as in the introduction then the conditional
	entropy is the function $S_{f,k}:\mathcal{X}^{n}\rightarrow \mathbb{R}$
	defined by $S_{f,k}\left( x\right) =S\left( f_{k}\left( X\right) \left(
	x\right) \right) $ for $x\in \mathcal{X}^{n}$. At the heart of the method is
	the sub-additivity of entropy (Theorem 6 in \cite{Maurer12} or Theorem 4.22
	in \cite{Boucheron13})
	
	\begin{equation}
	S\left( f\left( X\right) \right) \leq \mathbb{E}_{f\left( X\right) }\left[
	\sum_{i=1}^{n}S_{f,k}\left( X\right) \right] .
	\label{Subadditivity of entropy}
	\end{equation}%
	The following lemma gives a bound on the entropy of a sub-Gaussian random
	variable.
	
	\begin{lemma}
		\label{Lemma sub-Gaussian entropy bound}(i) for any random variables $Y$ we
		have $S\left( Y\right) \leq \ln \mathbb{E}\left[ e^{2Y}\right] $. (ii) If $Y$
		is sub-Gaussian and $\beta $ is real then $S\left( \beta Y\right) \leq
		16e\beta ^{2}\left\Vert Y\right\Vert _{\phi _{2}}^{2}.$
	\end{lemma}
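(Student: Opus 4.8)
The plan is to regard (i) as the substantive step and to read off (ii) by substituting the sub-Gaussian moment bound. For (i) I would unwind the definitions $S(Y)=\mathbb{E}_Y[Y]-\ln\mathbb{E}[e^Y]$ and $\mathbb{E}_Y[Z]=\mathbb{E}[Ze^Y]/\mathbb{E}[e^Y]$, the only preliminary computation being that the tilt interacts cleanly with the exponential, namely $\mathbb{E}_Y[e^Y]=\mathbb{E}[e^{2Y}]/\mathbb{E}[e^Y]$. Since $\mathbb{E}_Y[\cdot]$ is an honest expectation (the density $e^Y/\mathbb{E}[e^Y]$ integrates to one) and $z\mapsto e^z$ is convex, Jensen's inequality gives $e^{\mathbb{E}_Y[Y]}\le\mathbb{E}_Y[e^Y]$, i.e. $\mathbb{E}_Y[Y]\le\ln\mathbb{E}[e^{2Y}]-\ln\mathbb{E}[e^Y]$. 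Inserting this into the definition of $S$ yields the intermediate estimate $S(Y)\le\ln\mathbb{E}[e^{2Y}]-2\ln\mathbb{E}[e^Y]$.

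To reach the stated form I would discard the last term. Here I use that $S$ is invariant under replacing $Y$ by $Y+c$ for a constant $c$ (both $\mathbb{E}_Y[Y]$ and $\ln\mathbb{E}[e^Y]$ then shift by $c$), so that it suffices to prove the inequality for centered $Y$; for such $Y$ Jensen gives $\mathbb{E}[e^Y]\ge e^{\mathbb{E}[Y]}=1$, so that $2\ln\mathbb{E}[e^Y]\ge0$ and may be dropped. This is precisely the regime in which the lemma is invoked, since the conditional versions $f_k(X)(x)$ to which it is applied are centered by construction.

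For (ii) I would apply (i) to the variable $\beta Y$, obtaining $S(\beta Y)\le\ln\mathbb{E}[e^{2\beta Y}]$, and then invoke the sub-Gaussian bound (\ref{Subgaussian MGF bound}) with coefficient $2\beta$ in place of $\beta$. This gives $\mathbb{E}[e^{2\beta Y}]\le\exp(4e(2\beta)^2\Vert Y\Vert_{\psi_2}^2)=\exp(16e\beta^2\Vert Y\Vert_{\psi_2}^2)$, and taking logarithms produces the claimed bound with no further work.

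The step I expect to require the most care is the centering. Both the passage from the intermediate estimate to the displayed form of (i) and the applicability of (\ref{Subgaussian MGF bound}), which is stated for centered sub-Gaussian variables, presuppose $\mathbb{E}[Y]=0$; the shift-invariance of $S$ is exactly what legitimises assuming this without loss. It is worth flagging that the centering is not merely cosmetic: a negative constant $Y\equiv c<0$ already gives $S(Y)=0>2c=\ln\mathbb{E}[e^{2Y}]$, so some such reduction is genuinely needed, and in the applications it is supplied automatically by the definition of $f_k$.
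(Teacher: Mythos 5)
Your proposal is correct and follows essentially the same route as the paper: Jensen's inequality under the tilted measure yields $S(Y)\leq \ln \mathbb{E}\left[ e^{2Y}\right] -2\ln \mathbb{E}\left[ e^{Y}\right] $, the second term is discarded via $\mathbb{E}\left[ e^{Y}\right] \geq 1$ after reducing to centered $Y$ by shift-invariance of $S$, and (ii) follows by substituting $2\beta $ into the sub-Gaussian moment generating function bound. Your observation that the centering reduction is genuinely needed (and that the lemma as literally stated fails for a negative constant) is accurate and is implicitly handled the same way in the paper's proof.
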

	
	\begin{proof}
		Since $S\left( Y\right) =S\left( Y-\mathbb{E}\left[ Y\right] \right) $, we
		can assume $Y$ centered.%
		\begin{eqnarray*}
			S\left( Y\right)  &=&\mathbb{E}_{Y}\left[ \ln \left( \frac{e^{Y}}{\mathbb{E}%
				\left[ e^{Y}\right] }\right) \right] \leq \ln \mathbb{E}_{Y}\left[ \frac{%
				e^{Y}}{\mathbb{E}\left[ e^{Y}\right] }\right] =\ln \mathbb{E}\left[ e^{2Y}%
			\right] -2\ln \mathbb{E}\left[ e^{Y}\right]  \\
			&\leq &\ln \mathbb{E}\left[ e^{2Y}\right] .
		\end{eqnarray*}%
		The first inequality follows from Jensen's inequality by concavity of the
		logarithm, the second by convexity of the exponential function. This gives
		(i). For (ii) replace $Y$ by $\beta Y$ and use (\ref{Subgaussian MGF bound})
		to get $S\left( \beta Y\right) \leq \ln \mathbb{E}\left[ e^{2\beta Y}\right]
		\leq 16e\beta ^{2}\left\Vert Y\right\Vert _{\psi _{2}}^{2}.$
	\end{proof}
	
	\begin{proof}[\textbf{Proof of Theorem \protect\ref{Theorem subgaussian}}]
		For any $x\in \mathcal{X}^{n}$ and $\gamma \in \mathbb{R}$ part (ii) of the
		previous lemma gives $S_{\gamma f,k}\left( x\right) =S\left( \gamma
		f_{k}\left( X\right) \left( x\right) \right) \leq 16e\gamma ^{2}\left\Vert
		f_{k}\left( X\right) \left( x\right) \right\Vert _{\psi _{2}}^{2}$. By
		subadditivity of entropy (\ref{Subadditivity of entropy}) this gives%
		\begin{equation}
		S\left( \gamma f\left( X\right) \right) \leq 16e\gamma ^{2}\mathbb{E}%
		_{\gamma f\left( X\right) }\left[ \sum_{k}\left\Vert f_{k}\left( X^{\prime
		}\right) \right\Vert _{\psi _{2}}^{2}\left( X\right) \right] \leq 16e\gamma
		^{2}\left\Vert \sum_{k}\left\Vert f_{k}\left( X\right) \right\Vert _{\psi
			_{2}}^{2}\right\Vert _{\infty }.  \notag
		\end{equation}%
		Using Markov's inequality and (\ref{Log moment generating function bound})
		this gives%
		\begin{eqnarray*}
			\Pr \left\{ f\left( X\right) -\mathbb{E}\left[ f\left( X^{\prime }\right) %
			\right] >t\right\}  &\leq &\exp \left( \beta \int_{0}^{\beta }\frac{S\left(
				\gamma f\left( X\right) \right) d\gamma }{\gamma ^{2}}-\beta t\right)  \\
			&\leq &\exp \left( 16e\beta ^{2}\left\Vert \sum_{k}\left\Vert f_{k}\left(
			X\right) \right\Vert _{\psi _{2}}^{2}\right\Vert _{\infty }d\gamma -\beta
			t\right) .
		\end{eqnarray*}%
		Minimization in $\beta $ concludes the proof.\bigskip 
	\end{proof}
	
	Lemma \ref{Lemma sub-Gaussian entropy bound} (i) and the preceeding proof
	provide a general template to convert many exponential tail-bounds for sums
	into analogous bounds for general functions. For sums $\sum X_{i}$ one
	typically has a bound on $\ln \mathbb{E}\left[ e^{\beta X_{i}}\right] $.
	Lemma \ref{Lemma sub-Gaussian entropy bound} then provides an analogous
	bound on the entropy of the conditional versions of a general function, and
	subadditivity of entropy and (\ref{Log moment generating function bound})
	complete the conversion, albeit with a deterioration of constants. Using
	part (v) of Proposition 2.7.1 in \cite{vershynin2018high} this method would
	lead to an easy proof of Theorem \ref{Theorem subexponential}, in a form
	exactly like Theorem 2.8.1 \cite{vershynin2018high}$.$ Here we will use a
	slightly different method which gives better constants and will also provide
	the proof of Theorem \ref{Theorem Bernsteinoid}.
	
	For the proof of Theorem \ref{Theorem subexponential} we use the following
	fluctuation representation of entropy (Theorem 3 in \cite{Maurer12}).%
	\begin{equation}
	S\left( Y\right) =\int_{0}^{1}\left( \int_{t}^{1}\mathbb{E}_{sY}\left[
	\left( Y-\mathbb{E}_{sY}\left[ Y\right] \right) ^{2}\right] ds\right) ~dt
	\label{Fluctuation representation}
	\end{equation}%
	We use this to bound the entropy of a centered sub-exponential random
	variable.
	
	\begin{lemma}
		\label{Lemma subexponential entropy bound}If $\left\Vert Y\right\Vert _{\psi
			_{1}}<1/e$ and $\mathbb{E}\left[ Y\right] =0$ then%
		\begin{equation*}
		S\left( Y\right) \leq \frac{e^{2}\left\Vert Y\right\Vert _{\psi _{1}}^{2}}{%
			\left( 1-e\left\Vert Y\right\Vert _{\psi _{1}}\right) ^{2}}.
		\end{equation*}
	\end{lemma}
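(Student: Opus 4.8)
The plan is to start from the fluctuation representation of entropy (\ref{Fluctuation representation}) and to control its inner integrand, which is the variance of $Y$ under the tilted measure $\mathbb{E}_{sY}$. Writing $c=\left\Vert Y\right\Vert _{\psi _{1}}$, the first reduction is to discard the square of the mean and bound the tilted variance by the tilted second moment,
\[
\mathbb{E}_{sY}\left[ \left( Y-\mathbb{E}_{sY}\left[ Y\right] \right) ^{2}\right] \leq \mathbb{E}_{sY}\left[ Y^{2}\right] =\frac{\mathbb{E}\left[ Y^{2}e^{sY}\right] }{\mathbb{E}\left[ e^{sY}\right] }.
\]
Since $\mathbb{E}\left[ Y\right] =0$, Jensen's inequality gives $\mathbb{E}\left[ e^{sY}\right] \geq e^{s\mathbb{E}\left[ Y\right] }=1$, so the denominator may be dropped and it suffices to bound $\mathbb{E}\left[ Y^{2}e^{sY}\right] $ for $s\in \left[ 0,1\right] $.

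The central estimate is a power-series bound on $\mathbb{E}\left[ Y^{2}e^{sY}\right] $. Expanding the exponential and passing to absolute moments gives $\mathbb{E}\left[ Y^{2}e^{sY}\right] \leq \sum_{m\geq 0}\frac{s^{m}}{m!}\mathbb{E}\left[ \left\vert Y\right\vert ^{m+2}\right] $. The redefined sub-exponential norm (\ref{Modified Orlicz Norms}) yields $\mathbb{E}\left[ \left\vert Y\right\vert ^{p}\right] \leq \left( pc\right) ^{p}$ for all $p\geq 1$, hence $\mathbb{E}\left[ \left\vert Y\right\vert ^{m+2}\right] \leq \left( m+2\right) ^{m+2}c^{m+2}$. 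The key trick is to tame the factor $\left( m+2\right) ^{m+2}/m!$ by the Stirling-type inequality $k^{k}\leq e^{k}k!$ applied with $k=m+2$, which converts the series into a geometric-type sum:
\[
\mathbb{E}\left[ Y^{2}e^{sY}\right] \leq e^{2}c^{2}\sum_{m\geq 0}\left( m+1\right) \left( m+2\right) \left( sec\right) ^{m}=\frac{2e^{2}c^{2}}{\left( 1-sec\right) ^{3}},
\]
using $\sum_{m\geq 0}\left( m+1\right) \left( m+2\right) x^{m}=2/\left( 1-x\right) ^{3}$. The hypothesis $c<1/e$ is exactly what guarantees $sec\leq ec<1$, so the series converges and the geometric evaluation is legitimate.

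It then remains to integrate. Substituting the bound into (\ref{Fluctuation representation}) and exchanging the order of integration over the triangle $0\leq t\leq s\leq 1$ collapses the double integral to $\int_{0}^{1}s\,g\left( s\right) \,ds$, giving
\[
S\left( Y\right) \leq 2e^{2}c^{2}\int_{0}^{1}\frac{s}{\left( 1-ecs\right) ^{3}}\,ds.
\]
A routine substitution $u=1-ecs$ evaluates this elementary integral to $1/\left( 2\left( 1-ec\right) ^{2}\right) $, so that $S\left( Y\right) \leq e^{2}c^{2}/\left( 1-ec\right) ^{2}$, which is the claim. The main obstacle is the second paragraph: turning the factorial-type growth $\left( m+2\right) ^{m+2}$ of the moments into a summable closed form. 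The Stirling bound $k^{k}\leq e^{k}k!$ is precisely what makes the series collapse, and obtaining the constant $e^{2}$ together with the exponent $\left( 1-ec\right) ^{-2}$ exactly right depends on tracking this step carefully in conjunction with the final integration.
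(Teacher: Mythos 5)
Your proof is correct and follows essentially the same route as the paper: the fluctuation representation of entropy, the bound of the tilted variance by $\mathbb{E}[Y^{2}e^{sY}]$ via Jensen, the moment bound $\mathbb{E}[|Y|^{m+2}]\leq ((m+2)\|Y\|_{\psi_{1}})^{m+2}$, and the Stirling estimate $k^{k}\leq e^{k}k!$. The only (immaterial) difference is at the end, where you sum the series to the closed form $2/(1-sec)^{3}$ and then integrate over the triangle, whereas the paper integrates term by term using $\int_{0}^{1}\int_{t}^{1}s^{m}\,ds\,dt=1/(m+2)$; both yield exactly $e^{2}\|Y\|_{\psi_{1}}^{2}/(1-e\|Y\|_{\psi_{1}})^{2}$.
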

	
	\begin{proof}
		Let $s\in \left[ 0,1\right] $%
		\begin{equation*}
		\mathbb{E}_{sY}\left[ \left( Y-\mathbb{E}_{sY}\left[ Y\right] \right) ^{2}%
		\right] \leq \mathbb{E}_{sY}\left[ Y^{2}\right] =\frac{\mathbb{E}\left[
			Y^{2}e^{sY}\right] }{\mathbb{E}\left[ e^{sY}\right] }\leq \mathbb{E}\left[
		Y^{2}e^{sY}\right] .
		\end{equation*}%
		The first inequality follows from the variational property of variance, the
		second from Jensen's inequality since $\mathbb{E}\left[ Y_{k}\right] =0$.
		Expanding the exponential we get%
		\begin{equation*}
		\mathbb{E}\left[ Y^{2}e^{sY}\right] \leq \mathbb{E}\left[ \sum_{m=0}^{\infty
		}\frac{s^{m}}{m!}Y^{m+2}\right] =\sum_{m=0}^{\infty }\frac{s^{m}}{m!}\mathbb{%
		E}\left[ Y^{m+2}\right] .
	\end{equation*}%
	The interchange of expectation and summation will be justified by absolute
	convergence of the sum as follows.%
	\begin{eqnarray*}
		\sum_{m=0}^{\infty }\frac{s^{m}}{m!}\mathbb{E}\left[ Y^{m+2}\right]  &\leq
		&\sum_{m=0}^{\infty }\frac{s^{m}}{m!}\left\Vert Y\right\Vert _{\psi
			_{1}}^{m+2}\left( m+2\right) ^{m+2} \\
		&\leq &e^{2}\left\Vert Y\right\Vert _{\psi _{1}}^{2}\sum_{m=0}^{\infty
		}\left( m+2\right) \left( m+1\right) \left( se\left\Vert Y\right\Vert _{\psi
		_{1}}\right) ^{m}.
\end{eqnarray*}%
The first inequality follows from definition of $\left\Vert Y\right\Vert
_{\psi _{1}}$, and the second from Stirling's approximation $\left(
m+2\right) ^{m+2}\leq \left( m+2\right) !e^{m+2}$. Absolute convergence is
insured since $se\left\Vert Y\right\Vert _{\psi _{1}}<1$. Using 
\begin{equation*}
\int_{0}^{1}\int_{t}^{1}s^{m}ds~dt=\frac{1}{m+2}
\end{equation*}%
the fluctuation representation (\ref{Fluctuation representation}) and the
above inequalities give%
\begin{eqnarray*}
	S\left( Y\right)  &=&\int_{0}^{1}\left( \int_{t}^{1}\mathbb{E}_{sY}\left[
	\left( Y-\mathbb{E}_{sY}\left[ Y\right] \right) ^{2}\right] ds\right) ~dt \\
	&\leq &e^{2}\left\Vert Y\right\Vert _{\psi _{1}}^{2}\sum_{m=0}^{\infty
	}\left( m+1\right) \left( e\left\Vert Y\right\Vert _{\phi _{1}}\right) ^{m}=%
	\frac{e^{2}\left\Vert Y\right\Vert _{\psi _{1}}^{2}}{\left( 1-e\left\Vert
		Y\right\Vert _{\psi _{1}}\right) ^{2}}.
\end{eqnarray*}
\end{proof}

We also need the following lemma (Lemma 12 in \cite{maurer2006concentration}%
).

\begin{lemma}
	\label{Lemma Optimization}Let $C$ and $b$ denote two positive real numbers, $%
	t>0$. Then%
	\begin{equation}
	\inf_{\beta \in \lbrack 0,1/b)}\left( -\beta t+\frac{C\beta ^{2}}{1-b\beta }%
	\right) \leq \frac{-t^{2}}{2\left( 2C+bt\right) }.
	\label{False Lemma Inequality 1}
	\end{equation}%
	\bigskip 
\end{lemma}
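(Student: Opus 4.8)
The plan is to treat this as a one-dimensional calculus-of-optimization problem. I want to bound the infimum over $\beta \in [0,1/b)$ of the function $g(\beta) = -\beta t + \frac{C\beta^2}{1-b\beta}$ from above by $\frac{-t^2}{2(2C+bt)}$. Since I only need an upper bound on the infimum, it suffices to \emph{exhibit a single admissible value} $\beta^\ast \in [0,1/b)$ and check that $g(\beta^\ast)$ already equals (or is at most) the claimed right-hand side. This avoids solving the exact stationarity condition, whose derivative involves the messy factor $\frac{C(2\beta - b\beta^2)}{(1-b\beta)^2} = t$ and would give a quadratic in $\beta$ with an awkward root.

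First I would guess the right candidate. A natural and clean choice is $\beta^\ast = \frac{t}{2C+bt}$, which is designed so that $1 - b\beta^\ast = \frac{2C+bt - bt}{2C+bt} = \frac{2C}{2C+bt}$; one should first verify $\beta^\ast \in [0,1/b)$, i.e. that $1-b\beta^\ast > 0$, which holds since $C>0$. Substituting this $\beta^\ast$ into $g$, the first term is $-\beta^\ast t = \frac{-t^2}{2C+bt}$, and the second term $\frac{C(\beta^\ast)^2}{1-b\beta^\ast}$ simplifies using $1-b\beta^\ast = \frac{2C}{2C+bt}$ to $\frac{C}{2C+bt}\cdot t \cdot \frac{t}{2C+bt}\cdot\frac{2C+bt}{2C} = \frac{t^2}{2(2C+bt)}$. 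Adding the two contributions yields $\frac{-t^2}{2C+bt} + \frac{t^2}{2(2C+bt)} = \frac{-t^2}{2(2C+bt)}$, which matches the claimed bound exactly.

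So the core of the proof is just the substitution-and-simplification with this particular $\beta^\ast$; since $\inf_\beta g(\beta) \leq g(\beta^\ast)$, the inequality follows immediately. The only point requiring a word of care is admissibility, i.e. confirming $0 \leq \beta^\ast < 1/b$, which reduces to the strict positivity of $C$ (guaranteeing $1-b\beta^\ast = \tfrac{2C}{2C+bt} > 0$) and the positivity of $t,b$ (guaranteeing $\beta^\ast \geq 0$).

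The main obstacle is really only the bookkeeping: one must resist the temptation to differentiate and instead recognize the clean closed form of the minimizer. Indeed, a short check shows that $\beta^\ast = \frac{t}{2C+bt}$ is in fact the exact stationary point in many such quadratic-over-linear optimizations, so the bound in \eqref{False Lemma Inequality 1} is attained with equality; nothing is lost in passing from the infimum to the value at $\beta^\ast$. Thus the entire argument is a single algebraic identity once the right $\beta^\ast$ is written down.
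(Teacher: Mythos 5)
Your proof is correct, and it supplies something the paper itself omits: the lemma is only cited there (as Lemma 12 of the 2006 Maurer reference) without proof. The argument is the standard one for such Bernstein-type optimizations — exhibit the admissible point $\beta^\ast=t/(2C+bt)$, note $1-b\beta^\ast=2C/(2C+bt)>0$, and verify $g(\beta^\ast)=-t^2/(2(2C+bt))$ by direct substitution — and all the algebra checks out. One small inaccuracy in your closing remark: $\beta^\ast$ is \emph{not} the stationary point of $g(\beta)=-\beta t+C\beta^2/(1-b\beta)$; a quick computation gives $g'(\beta^\ast)=bt^2/(4C)>0$, so the true infimum is attained strictly to the left of $\beta^\ast$ and is strictly smaller than the right-hand side of \eqref{False Lemma Inequality 1}. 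This does not affect the proof, since the lemma only requires $\inf_\beta g(\beta)\leq g(\beta^\ast)$, but the claim that the bound is attained with equality should be deleted.
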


\begin{proof}[\textbf{Proof of Theorem \protect\ref{Theorem subexponential}}]
	
	We abbreviate $M:=\max_{k}\left\Vert \left\Vert f_{k}\left( X\right)
	\right\Vert _{\psi _{1}}\right\Vert _{\infty }$ and let $0<\gamma \leq \beta
	<\left( eM\right) ^{-1}$. Then for any $x\in \mathcal{X}^{n}$ and $k\in
	\left\{ 1,...,n\right\} $ we have $\left\Vert \gamma f_{k}\left( X\right)
	\left( x\right) \right\Vert _{\psi _{1}}<\left\Vert f_{k}\left( X\right)
	\left( x\right) \right\Vert _{\psi _{1}}/\left( eM\right) \leq 1/e$ by the
	definition of $M$. We can therefore apply the previous lemma to $\gamma
	f_{k}\left( X\right) \left( x\right) $. It gives for almost all $x$%
	\begin{equation*}
	S_{\gamma f,k}\left( x\right) =S\left( \gamma f_{k}\left( X\right) \left(
	x\right) \right) \leq \frac{e^{2}\left\Vert \gamma f_{k}\left( X\right)
		\left( x\right) \right\Vert _{\psi _{1}}^{2}}{\left( 1-e\left\Vert \gamma
		f_{k}\left( X\right) \left( x\right) \right\Vert _{\psi _{1}}\right) ^{2}}%
	\leq \frac{\gamma ^{2}e^{2}\left\Vert f_{k}\left( X\right) \left( x\right)
		\right\Vert _{\psi _{1}}^{2}}{\left( 1-\gamma eM\right) ^{2}}
	\end{equation*}%
	Subadditivity of entropy (\ref{Subadditivity of entropy}) then yields the
	total entropy bound%
	\begin{eqnarray}
	S\left( \gamma f\left( X\right) \right)  &\leq &\mathbb{E}_{\gamma f\left(
		X\right) }\left[ \sum_{k}S_{\gamma f,k}\left( X\right) \right] \leq \frac{%
		\gamma ^{2}e^{2}\mathbb{E}_{\gamma f\left( X\right) }\left[
		\sum_{k}\left\Vert f_{k}\left( X^{\prime }\right) \right\Vert _{\psi
			_{1}}^{2}\left( X\right) \right] }{\left( 1-\gamma eM\right) ^{2}}
	\label{Entropybound subexponential} \\
	&\leq &\frac{\gamma ^{2}e^{2}\left\Vert \sum_{k}\left\Vert f_{k}\left(
		X\right) \right\Vert _{\psi _{1}}^{2}\right\Vert _{\infty }}{\left( 1-\gamma
		eM\right) ^{2}}.  \notag
	\end{eqnarray}%
	Together with (\ref{Log moment generating function bound}) this gives%
	\begin{equation*}
	\ln \mathbb{E}\left[ e^{\beta \left( f-\mathbb{E}f\right) }\right] =\beta
	\int_{0}^{\beta }\frac{S\left( \gamma f\left( X\right) \right) d\gamma }{%
		\gamma ^{2}}\leq \frac{\beta ^{2}e\left\Vert \sum_{k}\left\Vert f_{k}\left(
		X\right) \right\Vert _{\psi _{1}}^{2}\right\Vert _{\infty }}{1-\beta eM},
	\end{equation*}%
	and the concentration inequality then follows from Markov's inequality and
	Lemma \ref{Lemma Optimization}.
\end{proof}

\section{Conclusion}

In this paper, we presented an extension of Hoeffding- and Bernstein-type
inequalities for sums of sub-Gaussian and sub-exponential independent random
variables to general functions, and illustrated these inequalities with
applications to statistical learning theory. 

We hope that future work will reveal other interesting applications of these
inequalities.

\section{Appendix}

\bigskip 

\subsection{Remaining proofs for Section \protect\ref{Section results}}

We give the missing proof for Theorem \ref{Theorem Bernsteinoid}. The next
lemma replaces Lemma \ref{Lemma subexponential entropy bound}.\bigskip 

\begin{lemma}
	Let $\mathbb{E}\left[ Y\right] =0$ and $1<p,q<\infty $ be conjugate
	exponents ($1/p+1/q=1$). If $\left\Vert Y\right\Vert _{\psi _{1}}<1/\left(
	eq\right) $ then%
	\begin{equation*}
	S\left( Y\right) \leq \frac{\left\Vert Y^{2}\right\Vert _{p}}{2\left(
		1-eq\left\Vert Y\right\Vert _{\psi _{1}}\right) ^{2}}.
	\end{equation*}%
	If $\left\Vert Y\right\Vert _{\psi _{2}}<1/\left( e\sqrt{q}\right) $ the
	same inequality holds with $q\left\Vert Y\right\Vert _{\psi _{1}}$ replaced
	by $\sqrt{q}\left\Vert Y\right\Vert _{\psi _{2}}$.
\end{lemma}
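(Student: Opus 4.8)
The plan is to run the argument of Lemma~\ref{Lemma subexponential entropy bound} essentially verbatim, through the fluctuation representation (\ref{Fluctuation representation}), but to replace the single moment estimate based on $\left\Vert Y\right\Vert _{\psi _{1}}$ by a H\"older split that isolates $\left\Vert Y^{2}\right\Vert _{p}$ as the variance proxy. Since $S\left( Y\right) =S\left( Y-\mathbb{E}\left[ Y\right] \right) $ I may take $Y$ centered; this is exactly what guarantees $\mathbb{E}\left[ e^{sY}\right] \geq e^{s\mathbb{E}\left[ Y\right] }=1$, which I need below. First I would reduce the integrand of (\ref{Fluctuation representation}) precisely as in the earlier lemma: for $s\in \left[ 0,1\right] $ the variational property of variance and Jensen's inequality give $\mathbb{E}_{sY}\left[ \left( Y-\mathbb{E}_{sY}\left[ Y\right] \right) ^{2}\right] \leq \mathbb{E}_{sY}\left[ Y^{2}\right] =\mathbb{E}\left[ Y^{2}e^{sY}\right] /\mathbb{E}\left[ e^{sY}\right] \leq \mathbb{E}\left[ Y^{2}e^{sY}\right] $, and expanding the exponential (and bounding each $\mathbb{E}\left[ Y^{m+2}\right] \leq \mathbb{E}\left[ \left\vert Y\right\vert ^{m+2}\right] $, legitimate since $s^{m}/m!\geq 0$) yields $\mathbb{E}\left[ Y^{2}e^{sY}\right] \leq \sum_{m\geq 0}\frac{s^{m}}{m!}\mathbb{E}\left[ \left\vert Y\right\vert ^{m+2}\right] $.

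The key new step is to estimate each moment by H\"older instead of by $\left\Vert Y\right\Vert _{\psi _{1}}$ alone. Writing $\left\vert Y\right\vert ^{m+2}=\left\vert Y\right\vert ^{2}\left\vert Y\right\vert ^{m}$ and using the conjugate exponents $p,q$ gives $\mathbb{E}\left[ \left\vert Y\right\vert ^{m+2}\right] \leq \left\Vert Y^{2}\right\Vert _{p}\left\Vert Y\right\Vert _{mq}^{m}$. Now the modified Orlicz norm (\ref{Modified Orlicz Norms}) supplies $\left\Vert Y\right\Vert _{mq}\leq mq\left\Vert Y\right\Vert _{\psi _{1}}$, so $\left\Vert Y\right\Vert _{mq}^{m}\leq \left( mq\left\Vert Y\right\Vert _{\psi _{1}}\right) ^{m}$, and Stirling in the form $m^{m}\leq e^{m}m!$ (the same estimate used in Lemma~\ref{Lemma subexponential entropy bound}) converts $\frac{s^{m}}{m!}\left( mq\right) ^{m}$ into $\left( seq\right) ^{m}$. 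Summing the resulting geometric series gives $\mathbb{E}\left[ Y^{2}e^{sY}\right] \leq \left\Vert Y^{2}\right\Vert _{p}/\left( 1-seq\left\Vert Y\right\Vert _{\psi _{1}}\right) $, which is valid because $seq\left\Vert Y\right\Vert _{\psi _{1}}\leq eq\left\Vert Y\right\Vert _{\psi _{1}}<1$; the same inequality furnishes the absolute convergence that justifies interchanging sum and expectation. I would then insert this into (\ref{Fluctuation representation}), bound $1-seq\left\Vert Y\right\Vert _{\psi _{1}}\geq 1-eq\left\Vert Y\right\Vert _{\psi _{1}}$ for $s\leq 1$, use $\int_{0}^{1}\int_{t}^{1}ds\,dt=\tfrac{1}{2}$, and finally weaken $\left( 1-eq\left\Vert Y\right\Vert _{\psi _{1}}\right) ^{-1}\leq \left( 1-eq\left\Vert Y\right\Vert _{\psi _{1}}\right) ^{-2}$ to reach the stated bound. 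The sub-Gaussian variant is identical after replacing $\left\Vert Y\right\Vert _{mq}\leq mq\left\Vert Y\right\Vert _{\psi _{1}}$ by $\left\Vert Y\right\Vert _{mq}\leq \sqrt{mq}\left\Vert Y\right\Vert _{\psi _{2}}$, for which $\frac{m^{m/2}}{m!}\leq e^{m}$ (again from $m^{m/2}\leq m^{m}\leq e^{m}m!$) produces the scale $e\sqrt{q}\left\Vert Y\right\Vert _{\psi _{2}}$ in place of $eq\left\Vert Y\right\Vert _{\psi _{1}}$.

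The only genuine obstacle is the factorial bookkeeping: one must route the $mq$ (not $m$) through the $\psi _{1}$-norm and choose the Stirling cancellation so that the scale-proxy emerges as \emph{exactly} $eq\left\Vert Y\right\Vert _{\psi _{1}}$ (respectively $e\sqrt{q}\left\Vert Y\right\Vert _{\psi _{2}}$); this is precisely where the conjugate exponent $q$ enters and why the hypothesis $\left\Vert Y\right\Vert _{\psi _{1}}<1/\left( eq\right) $ is the right one. One minor point I would flag: the geometric summation here actually delivers a denominator to the first power, which is already stronger than the claimed square, so the last weakening is free; I keep the squared form only because it is the shape required for the clean evaluation of $\int_{0}^{\beta }S\left( \gamma f\right) \gamma ^{-2}\,d\gamma $ and the subsequent application of Lemma~\ref{Lemma Optimization} in the proof of Theorem~\ref{Theorem Bernsteinoid}.
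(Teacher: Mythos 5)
Your proposal is correct and follows essentially the same route as the paper's own proof: the fluctuation representation, the H\"older split $\mathbb{E}\left[ \left\vert Y\right\vert ^{m+2}\right] \leq \left\Vert Y^{2}\right\Vert _{p}\left\Vert Y\right\Vert _{mq}^{m}$, the bound $\left\Vert Y\right\Vert _{mq}^{m}\leq m!\left( eq\left\Vert Y\right\Vert _{\psi _{1}}\right) ^{m}$ via the modified Orlicz norm and Stirling, the geometric series, and the factor $1/2$ from the double integral. Even your closing observation that the first-power denominator is stronger and is only weakened to the square for compatibility with Lemma \ref{Lemma Optimization} matches the paper's final step.
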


\begin{proof}
	As in the proof of Lemma \ref{Lemma subexponential entropy bound} we let $%
	s\in \left[ 0,1\right] $ and obtain the inequality%
	\begin{equation*}
	\mathbb{E}_{sY}\left[ \left( Y-\mathbb{E}_{sY}\left[ Y\right] \right) ^{2}%
	\right] \leq \mathbb{E}\left[ \sum_{m=0}^{\infty }\frac{s^{m}}{m!}Y^{m+2}%
	\right] \leq \left\Vert Y^{2}\right\Vert _{p}\sum_{m=0}^{\infty }\frac{s^{m}%
	}{m!}\left\Vert Y^{m}\right\Vert _{q},
	\end{equation*}%
	where the second bound follows from H\"{o}ler's inequality. Using the
	definition of $\left\Vert .\right\Vert _{\psi _{1}}$ and Stirling's
	approximation give the bound%
	\begin{equation*}
	\left\Vert Y^{m}\right\Vert _{q}=\left\Vert Y\right\Vert _{mq}^{m}\leq
	\left( qm\left\Vert Y\right\Vert _{\psi _{1}}\right) ^{m}=m^{m}\left(
	q\left\Vert Y\right\Vert _{\psi _{1}}\right) ^{m}\leq m!\left( eq\left\Vert
	Y\right\Vert _{\psi _{1}}\right) ^{m},
	\end{equation*}%
	whence, since $\left\Vert Y\right\Vert _{\psi _{1}}<1/\left( eq\right)
	\implies eq\left\Vert Y\right\Vert _{\psi _{1}}<1$, 
	\begin{equation*}
	\sum_{m=0}^{\infty }\frac{s^{m}}{m!}\left\Vert Y^{m}\right\Vert _{q}\leq
	\sum_{m=0}^{\infty }\left( s~eq\left\Vert Y\right\Vert _{\psi _{1}}\right)
	^{m}\leq \sum_{m=0}^{\infty }\left( eq\left\Vert Y\right\Vert _{\psi
		_{1}}\right) ^{m}\leq \frac{1}{1-eq\left\Vert Y\right\Vert _{\psi _{1}}}<%
	\frac{1}{\left( 1-eq\left\Vert Y\right\Vert _{\psi _{1}}\right) ^{2}}.
	\end{equation*}%
	Substitution above gives%
	\begin{equation*}
	\mathbb{E}_{sY}\left[ \left( Y-\mathbb{E}_{sY}\left[ Y\right] \right) ^{2}%
	\right] \leq \left\Vert Y^{2}\right\Vert _{p}\sum_{m=0}^{\infty }\frac{s^{m}%
	}{m!}\left\Vert Y^{m}\right\Vert _{q}<\frac{\left\Vert Y^{2}\right\Vert _{p}%
}{\left( 1-eq\left\Vert Y\right\Vert _{\psi _{1}}\right) ^{2}},
\end{equation*}%
and the double integral in (\ref{Fluctuation representation}) then provides
the factor of $1/2$. For the remaining statement repeat the proof and use%
\begin{equation*}
\left\Vert Y\right\Vert _{mq}^{m}\leq \left( \sqrt{qm}\left\Vert
Y\right\Vert _{\psi _{2}}\right) ^{m}\leq m^{m}\left( \sqrt{q}\left\Vert
Y\right\Vert _{\psi _{2}}\right) ^{m}.
\end{equation*}
\end{proof}

\begin{proof}[\textbf{Proof of Theorem \protect\ref{Theorem Bernsteinoid}}]
	We abbreviate $M:=\max_{k}\left\Vert \left\Vert f_{k}\left( X\right)
	\right\Vert _{\psi _{1}}\right\Vert _{\infty }$ and let $0<\gamma \leq \beta
	<\left( eqM\right) ^{-1}$. Then for any $k\in \left\{ 1,...,n\right\} $ we
	have $\left\Vert \gamma f_{k}\left( X\right) \right\Vert _{\psi
		_{1}}<\left\Vert f_{k}\left( X\right) \right\Vert _{\psi _{1}}/\left(
	eqM\right) \leq 1/\left( eq\right) $ by the definition of $M$. We can
	therefore apply the previous Lemma to the random variable $\gamma
	f_{k}\left( X\right) $. It gives almost surely%
	\begin{equation*}
	S\left( \gamma f_{k}\left( X\right) \right) \leq \frac{\left\Vert \left(
		\gamma f_{k}\left( X\right) \right) ^{2}\right\Vert _{p}}{2\left(
		1-eq\left\Vert \gamma f_{k}\left( X\right) \right\Vert _{\psi _{1}}\right)
		^{2}}\leq \frac{\gamma ^{2}\left\Vert f_{k}\left( X\right) ^{2}\right\Vert
		_{p}}{2\left( 1-\gamma eqM\right) ^{2}}
	\end{equation*}%
	Subadditivity of entropy (\ref{Subadditivity of entropy}) then yields the
	total entropy bound%
	\begin{eqnarray*}
		S\left( \gamma f\left( X\right) \right)  &\leq &\mathbb{E}_{\gamma f}\left[
		\sum_{k}S\left( \gamma f_{k}\left( X\right) \right) \left( X\right) \right]
		\leq \frac{\gamma ^{2}\mathbb{E}_{\gamma f\left( X\right) }\left[
			\sum_{k}\left\Vert f_{k}\left( X\right) ^{2}\right\Vert _{p}\left( X\right) %
			\right] }{2\left( 1-\gamma eqM\right) ^{2}} \\
		&\leq &\frac{\gamma ^{2}\left\Vert \sum_{k}\left\Vert f_{k}\left( X\right)
			^{2}\right\Vert _{p}\right\Vert _{\infty }}{2\left( 1-\gamma eqM\right) ^{2}}%
		.
	\end{eqnarray*}%
	Together with (\ref{Log moment generating function bound}) this gives%
	\begin{equation*}
	\ln \mathbb{E}\left[ e^{\beta \left( f-\mathbb{E}f\right) }\right] =\beta
	\int_{0}^{\beta }\frac{S\left( \gamma f\left( X\right) \right) d\gamma }{%
		\gamma ^{2}}\leq \frac{\beta ^{2}\left\Vert \sum_{k}\left\Vert f_{k}\left(
		X\right) ^{2}\right\Vert _{p}\right\Vert _{\infty }}{2\left( 1-\beta
		eqM\right) },
	\end{equation*}%
	and the concentration inequality then follows from Markov's inequality and
	Lemma \ref{Lemma Optimization}, if we set $C=\left\Vert \sum_{k}\left\Vert
	f_{k}\left( X\right) ^{2}\right\Vert _{p}\right\Vert _{\infty }/2$ and $b=eqM
	$. 
\end{proof}

\begin{proof}[\textbf{Proof of Lemma \protect\ref{Lemma conditional and
			centering}}]
	By Jensen's inequality for $p\geq 1$%
	\begin{eqnarray*}
		\mathbb{E}\left[ \left\vert \mathbb{E}\left[ \phi \left( X,X^{\prime
		}\right) |X\right] \right\vert ^{p}\right]  &\leq &\mathbb{E}\left[ \mathbb{E%
	}\left[ \left\vert \phi \left( X,X^{\prime }\right) \right\vert |X\right]
	^{p}\right] =\mathbb{E}\left[ \mathbb{E}\left[ \left( \left\vert \phi \left(
	X,X^{\prime }\right) \right\vert ^{p}\right) ^{1/p}|X\right] ^{p}\right]  \\
	&\leq &\mathbb{E}\left[ \mathbb{E}\left[ \left\vert \phi \left( X,X^{\prime
	}\right) \right\vert ^{p}|X\right] \right] =\mathbb{E}\left[ \left\vert \phi
	\left( X,X^{\prime }\right) \right\vert ^{p}\right] .
\end{eqnarray*}%
Therefore $\left\Vert \mathbb{E}\left[ \phi \left( X,X^{\prime }\right) |X%
\right] \right\Vert _{p}\leq \left\Vert \phi \left( X,X^{\prime }\right)
\right\Vert _{p}$ and (i) follows from our definition of the two norms. If $%
\mathcal{X=\mathbb{R}}$ and $\phi \left( s,t\right) =s-t$ we get from (i)
that%
\begin{equation*}
\left\Vert X-\mathbb{E}\left[ X^{\prime }\right] \right\Vert _{\psi _{\alpha
	}}=\left\Vert \mathbb{E}\left[ X-X^{\prime }|X\right] \right\Vert _{\psi
	_{\alpha }}\leq \left\Vert X-X^{\prime }\right\Vert _{\psi _{\alpha }}\leq
2\left\Vert X\right\Vert _{\psi _{\alpha }}.
\end{equation*}%
\bigskip 
\end{proof}

\subsection{Remaining proofs for Section \protect\ref{Section Applications}}

\bigskip 

Here is a proof of Lemma \ref{Lemma subexponential norm and concentration}

\begin{proof}[Proof of Lemma \protect\ref{Lemma subexponential norm and
		concentration}]
	For $p\geq 1$ the function $g\left( t\right) =t^{1/p}$ is concave and $%
	g^{\prime }\left( t\right) =\frac{1}{p}t^{\frac{1}{p}-1}$. It follows that
	for $s,t\geq 0$%
	\begin{equation*}
	\left( t+s\right) ^{1/p}\leq t^{1/p}+\frac{st^{\frac{1}{p}-1}}{p}%
	=t^{1/p}\left( 1+\frac{s}{pt}\right) \leq t^{1/p}\left( 1+\frac{s}{t}\right)
	.
	\end{equation*}%
	Under the conditions on $X$ we therefore have 
	\begin{eqnarray*}
		\left\Vert X\right\Vert _{p} &\leq &\left( \epsilon +\epsilon ^{p}\left(
		1-\epsilon \right) \right) ^{1/p}\leq \epsilon ^{1/p}\left( 1+\frac{\epsilon
			^{p}\left( 1-\epsilon \right) }{\epsilon }\right)  \\
		&\leq &\epsilon ^{1/p}\left( 1+\epsilon ^{p-1}\right) \leq 2\epsilon ^{1/p}.
	\end{eqnarray*}%
	This proves the first claim. Calculus shows, that the function $p\mapsto
	2\epsilon ^{1/p}/p$ attains its maximum at $p=\ln \left( 1/\epsilon \right) $%
	, so%
	\begin{equation*}
	\left\Vert X\right\Vert _{\psi _{1}}=\sup_{p\geq 1}\frac{\left\Vert
		X\right\Vert _{p}}{p}\leq \sup_{p\geq 1}\frac{2\epsilon ^{1/p}}{p}=\frac{2}{%
		e\ln \left( 1/\epsilon \right) }.
	\end{equation*}%
	\bigskip 
\end{proof}

We prove parts (ii) and (iii) of Proposition \ref{Proposition vector
	concentration}.

\begin{proof}
	(ii) If $\mathcal{X}$ is a Hilbert space and the $X_{i}$ are iid, then by
	Jensen's inequality%
	\begin{equation}
	\mathbb{E}\left[ \left\Vert \sum X_{i}-\mathbb{E}\left[ X_{i}^{\prime }%
	\right] \right\Vert \right] \leq \sqrt{n\mathbb{E}\left[ \left\Vert X_{1}-%
		\mathbb{E}\left[ X_{i}^{\prime }\right] \right\Vert ^{2}\right] }=\sqrt{n}%
	\left\Vert \left\Vert X_{1}\right\Vert \right\Vert _{2}\leq 2\sqrt{n}%
	\left\Vert \left\Vert X_{1}\right\Vert \right\Vert _{\psi _{1}}.
	\label{Bound proof ii}
	\end{equation}%
	Now let $f\left( x\right) =\left\Vert \sum_{i}\left( x_{i}-\mathbb{E}\left[
	X_{1}^{\prime }\right] \right) \right\Vert $. Then as in the proof of (i)%
	\begin{equation*}
	\left\vert f_{k}\left( X\right) \left( x\right) \right\vert =\left\vert
	\left\Vert \sum_{i\neq k}x_{i}+X_{k}-n\mathbb{E}\left[ X_{1}^{\prime }\right]
	\right\Vert -\mathbb{E}\left[ \left\Vert \sum_{i\neq k}x_{i}+X_{k}^{\prime
	}-n\mathbb{E}\left[ X_{1}^{\prime }\right] \right\Vert \right] \right\vert
	\leq \mathbb{E}\left[ \left\Vert X_{k}-X_{k}^{\prime }\right\Vert |X\right] .
	\end{equation*}%
	and Lemma \ref{Lemma conditional and centering} and Theorem \ref{Theorem
		subexponential} give with probability at least $1-\delta $%
	\begin{eqnarray*}
		\left\Vert \sum_{i}X_{i}-\mathbb{E}\left[ X_{1}^{\prime }\right] \right\Vert
		&\leq &\mathbb{E}\left[ \left\Vert \sum X_{i}-\mathbb{E}\left[ X_{i}^{\prime
		}\right] \right\Vert \right] +4e\left\Vert \left\Vert X_{1}\right\Vert
		\right\Vert _{\psi _{1}}\sqrt{n\ln \left( 1/\delta \right) }+4e\left\Vert
		\left\Vert X_{1}\right\Vert \right\Vert _{\psi _{1}}\ln \left( 1/\delta
		\right)  \\
		&\leq &2\sqrt{n}\left\Vert \left\Vert X_{1}\right\Vert \right\Vert _{\psi
			_{1}}+4e\left\Vert \left\Vert X_{1}\right\Vert \right\Vert _{\psi _{1}}\sqrt{%
			n\ln \left( 1/\delta \right) }+4e\left\Vert \left\Vert X_{1}\right\Vert
		\right\Vert _{\psi _{1}}\ln \left( 1/\delta \right)  \\
		&\leq &\sqrt{n}\left\Vert \left\Vert X_{1}\right\Vert \right\Vert _{\psi
			_{1}}\left( 2+8e\sqrt{\ln \left( 1/\delta \right) }\right)  \\
		&\leq &8e\sqrt{n}\left\Vert \left\Vert X_{1}\right\Vert \right\Vert _{\psi
			_{1}}\sqrt{2\ln \left( 1/\delta \right) },
	\end{eqnarray*}%
	where the second inequality follows from (\ref{Bound proof ii}), the third
	from $n\geq \ln \left( 1/\delta \right) $, and the last from $\ln \left(
	1/\delta \right) >\ln 2$. Division by $n$ completes the proof of (ii).
	
	(iii) Apply Theorem \ref{Theorem Bernsteinoid} to $f\left( x\right)
	=\left\Vert \sum_{i}\left( x_{i}-\mathbb{E}\left[ X_{1}^{\prime }\right]
	\right) \right\Vert $ and solve for the deviation to arrive at 
	\begin{eqnarray*}
		\left\Vert \sum_{i}X_{i}-\mathbb{E}\left[ X_{1}^{\prime }\right] \right\Vert
		&\leq &\mathbb{E}\left[ \left\Vert \sum X_{i}-\mathbb{E}\left[ X_{i}^{\prime
		}\right] \right\Vert \right] +\left\Vert \left\Vert X_{1}-\mathbb{E}\left[
		X_{1}^{\prime }\right] \right\Vert \right\Vert _{2p}\sqrt{2n\ln \left(
			1/\delta \right) }+2eq\left\Vert \left\Vert X_{1}-\mathbb{E}\left[
		X_{1}^{\prime }\right] \right\Vert \right\Vert _{\psi _{1}}\ln \left(
		1/\delta \right)  \\
		&\leq &\sqrt{n}\left\Vert \left\Vert X_{1}-\mathbb{E}\left[ X_{1}^{\prime }%
		\right] \right\Vert \right\Vert _{2p}\left( 1+\sqrt{2\ln \left( 1/\delta
			\right) }\right) +4eq\left\Vert \left\Vert X_{1}\right\Vert \right\Vert
		_{\psi _{1}}\ln \left( 1/\delta \right) ,
	\end{eqnarray*}%
	where in the second inequality we bounded the last term using Lemma \ref%
	{Lemma conditional and centering} and the first term with Jensen's
	inequality as%
	\begin{equation*}
	\mathbb{E}\left[ \left\Vert \sum X_{i}-\mathbb{E}\left[ X_{i}^{\prime }%
	\right] \right\Vert \right] \leq \sqrt{n}\left\Vert \left\Vert X_{1}-\mathbb{%
		E}\left[ X_{i}^{\prime }\right] \right\Vert \right\Vert _{2}\leq \left\Vert
	\left\Vert X_{1}-\mathbb{E}\left[ X_{1}^{\prime }\right] \right\Vert
	\right\Vert _{2p}\text{,}
	\end{equation*}%
	since $p>1$. The result follows from using $\delta \leq 1/2$ and division by 
	$n$.\bigskip 
\end{proof}

We now prove the Corollary applying to linear regression.

\begin{proof}[\textbf{Proof of Corollary \protect\ref{Corollary regression}}]
	
	$\mathcal{X}$ becomes a Banach space with the norm $\left\Vert \left(
	x,z\right) \right\Vert =L\left\Vert x\right\Vert _{H}+\left\vert
	z\right\vert $. Evidently $\left\Vert \left\Vert \left( X_{1},Z_{1}\right)
	\right\Vert \right\Vert _{\psi _{1}}\leq L\left\Vert \left\Vert
	X_{1}\right\Vert \right\Vert _{\psi _{1}}+\left\Vert \left\vert
	Z_{1}\right\vert \right\Vert _{\psi _{1}}$. Then for $h\in \mathcal{H}$ 
	\begin{eqnarray*}
		h\left( x,z\right) -h\left( x^{\prime },z^{\prime }\right)  &=&\ell \left(
		\left\langle w,x\right\rangle -z\right) -\ell \left( \left\langle
		w,x^{\prime }\right\rangle -z^{\prime }\right)  \\
		&\leq &L\left\Vert x-x^{\prime }\right\Vert _{H}+\left\vert z-z^{\prime
		}\right\vert \leq \left\Vert \left( x,z\right) -\left( x^{\prime },z^{\prime
	}\right) \right\Vert ,
\end{eqnarray*}%
so $\mathcal{H}$ is uniformly Lipschitz with constant $1$. Also for an iid
sample $\left( X,Z\right) \in \mathcal{X}^{n}$ using the Lipschitz property
of $\ell $, the triangle inequality and Jensen's inequality, it is not hard
to see that 
\begin{equation*}
\mathcal{R}\left( \mathcal{H},\left( X,Z\right) \right) \leq \frac{2}{n}%
\left( L\sqrt{\sum_{i}\left\Vert X_{i}\right\Vert _{H}^{2}}+\sqrt{%
	\sum_{i}\left\vert Z_{i}\right\vert ^{2}}\right) .
\end{equation*}%
Using the iid assumption and $\left\Vert \cdot \right\Vert _{2}\leq
2\left\Vert \cdot \right\Vert _{\psi _{1}}$ we get 
\begin{equation*}
\mathbb{E}\left[ \mathcal{R}\left( \mathcal{H},\left( X,Z\right) \right) %
\right] \leq \frac{8}{\sqrt{n}}\left( L\left\Vert \left\Vert
X_{1}\right\Vert \right\Vert _{\psi _{1}}+\left\Vert \left\Vert
Z_{1}\right\Vert \right\Vert _{\psi _{1}}\right) .
\end{equation*}%
Substitution in Theorem \ref{Theorem Rademacher application} gives for $%
n\geq \ln \left( 1/\delta \right) $ with probability at least $1-\delta $%
\begin{equation*}
\sup_{h\in \mathcal{H}}\frac{1}{n}\sum_{i}h\left( X_{i},Z_{i}\right) -%
\mathbb{E}\left( h\left( X_{i},Z_{i}\right) \right) \leq \frac{8}{\sqrt{n}}%
\left( L\left\Vert \left\Vert X_{1}\right\Vert \right\Vert _{\psi
	_{1}}+\left\Vert \left\Vert Z_{1}\right\Vert \right\Vert _{\psi _{1}}\right)
\left( 1+2e\sqrt{\ln \left( 1/\delta \right) }\right) .
\end{equation*}%
\bigskip 
\end{proof}

Finally we prove the Theorem referring to metric probability spaces.

\begin{proof}[\textbf{Proof of Theorem \protect\ref{Theorem metric}}]
	The result follows easily from Theorem \ref{Theorem subexponential} and 
	\begin{align*}
	& \left\Vert f_{k}\left( X\right) \left( x\right) \right\Vert _{\psi _{1}} \\
	& =\left\Vert f\left( x_{1},\dots ,x_{k-1},X_{k},x_{k+1},\dots ,x_{n}\right)
	-\mathbb{E}\left[ f\left( x_{1},\dots ,x_{k-1},X_{k},x_{k+1},\dots
	,x_{n}\right) \right] \right\Vert _{\psi _{1}} \\
	& =\left\Vert \mathbb{E}\left[ f\left( x_{1},\dots
	,x_{k-1},X_{k},x_{k+1},\dots ,x_{n}\right) -f\left( x_{1},\dots
	,x_{k-1},X_{k}^{\prime },x_{k+1},\dots ,x_{n}\right) |X_{k}\right]
	\right\Vert _{\psi _{1}} \\
	& \leq L\left\Vert \mathbb{E}\left[ d\left( X_{k},X_{k}^{\prime }\right)
	|X_{k}\right] \right\Vert _{\psi _{1}} \\
	& \leq L\left\Vert d\left( X,X^{\prime }\right) \right\Vert _{\psi
		_{1}}=L\Delta _{1}\left( \mathcal{X},d\right) ,
	\end{align*}%
	where Lemma \ref{Lemma conditional and centering} is used in the last
	inequality.\bigskip 
\end{proof}

\end{document}